\def\Ddots{\mathinner{\mkern1mu\raise\p@
\vbox{\kern7\p@\hbox{.}}\mkern2mu
\raise4\p@\hbox{.}\mkern2mu\raise7\p@\hbox{.}\mkern1mu}}
\newtheorem{theorem}{Theorem}[section]
\newtheorem{corollary}[theorem]{Corollary}
\newtheorem{lemma}[theorem]{Lemma}
\theoremstyle{definition}
\newtheorem{definition}[theorem]{Definition}
\newtheorem{ex}[theorem]{Examples}
\begin{document}
	
	\title{ Dynamical characterization of central sets\\
 in adequate partial semigroups  }

	\date{}
	\author{Pintu Debnath
		\footnote{Department of Mathematics,
			Basirhat College,
			Basirhat-743412, North 24th parganas, West Bengal, India.\hfill\break
			{\tt pintumath1989@gmail.com}}
   \and
		Sayan Goswami 
		\footnote{Department  Mathematics,
			Ramakrishna Mission Vivekananda Educational and Research Institute,
			Belur, Howrah, 711202, India\hfill\break
			{\tt sayan92m@gmail.com}}
		\and
	Sourav Kanti Patra
	\footnote{Department of Mathematics,
		GITAM University Hyderabad Campus,
		Hyderabad, Telangana- 502329, India. \hfill\break
		{\tt spatra3@gitam.edu}	
		}
	}
	\maketitle	
	
	\begin{abstract}
Using the methods from topological dynamics,  H. Furstenberg introduced the notions of Central sets and proved the famous Central Sets Theorem which is the simultaneous extension of the van der Waerden and Hindman Theorem. Later N. Hindman and V. Bergelson found an equivalent formulation of Central sets in the set of natural numbers in terms of the algebra of the Stone-\v{C}ech compactification of discrete semigroups. The general case was proved by H. Shi and H. Yang. Using the notions of ultrafilters,  J. McLeod introduced the notions of Central sets for commutative adequate partial semigroups, however for noncommutative cases, Central sets can be defined similarly. In this article, introducing the notions of topological dynamics for partial semigroup actions, we find an equivalent dynamical characterization of central sets in partial semigroups\footnote{Recently in \cite{GTG}, authors attempted to do the same but in a different approach.}. Throughout our article, we follow the approach of N. Hindman and D. Strauss \cite{HS1}.
	\end{abstract}

 \textbf{Keywords:} Partial semigroup, Topological dynamics, Central set, Algebra of the  Stone-\v{C}ech compactifications of the discrete semigroup. 
	
	\textbf{MSC 2020:} 05D10, 22A15, 54D35

	\section{Introduction}
For any nonempty set $X$, let $\mathcal{P}_f(X)$ be the set of all nonempty finite subsets of $X.$
Arithmetic Ramsey theory deals with the monochromatic patterns found in any given finite coloring of the
integers or of the natural numbers $\mathbb{N}$. Here ``coloring” means disjoint partition and a set is called ``monochromatic” if it is included in one piece of the partition. A cornerstone result in this field of
research is Van der Waerden's Theorem \cite{14}, which states that for any finite coloring of the natural numbers one always finds arbitrarily long monochromatic arithmetic progressions. For any injective sequence $\langle x_n\rangle_n$, a set of the form $FS(\langle x_n\rangle_n)=\left\lbrace \sum_{t\in H}x_t:H\text{ is a nonempty finite subset of }\mathbb{N}\right\rbrace$ is called an $IP$ set. Hindman's finite sum theorem  \cite{H74} is one of the fundamental theorems in Ramsey theory which says that 
 for every finite coloring of $\mathbb{N}$, there exists a monochromatic $IP$ set.
Central sets play a major role in the study of Arithmetic Ramsey theory. Basically many Ramsey theoretic patterns are found in Central sets. To recall the notions of Central sets, we need the following notions of dynamical systems. For details, literature on Central sets we refer to the article \cite{H20}.

\subsection{Preliminaries}
\subsubsection{Topological dynamics}
Before we start our discussion, we need to recall some basic ideas from topological dynamics.
	\begin{definition}[Dynamical system]\label{dynamical system}
	Let $S$ be a semigroup. A dynamical system is a pair $\left(X,\langle T_s\rangle_{s\in S}\right)$ such that 
	\begin{itemize}
		\item[(i)] $X$ is a compact Hausdroff space,
		\item[(ii)]for each $s$, $T_{s}:X\rightarrow X$  is continuous, and
		\item[(iii)] for all $s,t\in S$, $T_{s} T_{t}=T_{st}$.
	\end{itemize}
	\end{definition}

 The notions of syndetic sets are intimately related to the study of topological dynamics and are important for their combinatorial applications. A set is said to be a syndetic set if a finite translation of this set covers the entire semigroup.
	\begin{definition}[Syndetic set]{\cite[Definition 4.38]{HS1}}
     	Let $S$ be a semigroup and let $A\subseteq S$. The set $A$ is syndetic if and only if there is some $H\in \mathcal{P}_{f}\left(S\right)$ such that $S=\bigcup_{s\in H}s^{-1}A$.
	\end{definition}
	
The notion of proximal points and uniformly recurrent points naturally arise in topological dynamics and are the most studied topic in topological dynamics.  Both of these sets have beautiful applications in combinatorics. For details, readers can see \cite{F}.
	\begin{definition}\cite[Definition 1.2(b),(c)]{BuH}
		Let $S$ be a semigroup and let $\left(X,\langle T_s\rangle_{s\in S}\right)$ be a dynamical system.
		
		\begin{itemize}
		\item[(i)] A point $y\in S$ is uniformly recurrent if and only if for every neighborhood $U$ of $y$, $\left\{s\in S:T_{s}\left(y\right)\in U\right\} $ is syndetic.
		\item[(ii)] The points $x$ and $y$ of $X$ are proximal if and only if for every neighborhood $U$ of the diagonal in $X\times X$, there is some $s\in S$ such that $\left(T_{s}\left(x\right),T_{s}\left(y\right)\right)\in U$.
		\end{itemize}
	\end{definition}
In \cite{F}, H. Furstenberg introduced the notions of Central sets using topological dynamics. Before we dive into the study of central sets, let us recall the following notions of dynamically central sets from \cite{HS1}.	
	\begin{definition}[Dynamically central sets]\label{key}{\cite[Definition 19.20 Page-499]{HS1}}
		Let $S$ be a semigroup. A set $C\subseteq S$ is dynamically central if and only if there exists a dynamical system $\left(X,\langle T_s\rangle_{s\in S}\right)$, points $x$ and $y$ in $X$, and a neighborhood $U$ of $y$ such that
		\begin{itemize}
		\item[(i)] $y$ is a uniformly recurrent point of $X$,
		\item[(ii)] $x$ and $y$ are proximal, and 
		\item[(iii)] $C=\left\{s\in S:T_{s}\left(x\right)\in U\right\}$.
		
		\end{itemize}

	\end{definition}
 As later in \cite{BH} and \cite{SY}, authors found an equivalent definition of Central sets using the theory of ultrafilters, to discuss their results we need to recall the theory of the Algebra of the Stone-\v{C}ech compactification of discrete semigroups.
\subsubsection{Algebra of the Stone-\v{C}ech compactification of a semigroup }
	Let $\left(S,\cdot\right)$ be a discrete semigroup, and let $\beta S$ be the Stone-\v{C}ech compactification of $(S,\cdot ).$ For any $(\neq \emptyset)\,A\subseteq S,$ let $\{\overline{A}=\{p\in \beta S: A\in p\}.$
	The set $\{\overline{A}:(\neq \emptyset)\,A\subseteq S\}$ forms a basis. One can show that with this topology $(\beta S, \cdot)$ is a compact Hausdorff space.
 The operation `$\cdot$' on $S$ can be extended over $\beta S$ in the following way:
 for any $p,q\in \beta S$, $p\cdot q\in \beta S$ is defined as $$A\in p\cdot q\iff \{x:\{y:x\cdot y\in A\}\in q\}\in p.$$
 
 One can show that $(\beta S,\cdot)$ is a compact right topological semigroup (meaning that for each $p\in\beta S$ the function $\rho_{p}\left(q\right):\beta S\rightarrow\beta S$ defined by $\rho_{p}\left(q\right)=q\cdot p$ 
	is continuous) with $S$ contained in its topological center (meaning
	that for any $x\in S$, the function $\lambda_{x}:\beta S\rightarrow\beta S$
	defined by $\lambda_{x}(q)=x\cdot q$ is continuous). This is a famous
	Theorem due to Ellis that if $S$ is a compact right topological semigroup
	then the set of idempotents $E\left(S\right)\neq\emptyset$. A nonempty
	subset $I$ of a semigroup $T$ is called a $\textit{left ideal}$
	of $S$ if $TI\subset I$, a $\textit{right ideal}$ if $IT\subset I$,
	and a $\textit{two sided ideal}$ (or simply an $\textit{ideal}$)
	if it is both a left and right ideal. A $\textit{minimal left ideal}$
	is the left ideal that does not contain any proper left ideal. One can prove that a set  $A$  is  syndetic if  for every left ideal $L$ of $\delta S$, $\overline{A}\cap L\neq\emptyset$.
 Similarly,
	we can define $\textit{minimal right ideal}$ and $\textit{smallest ideal}$.
	
	Any compact Hausdorff right topological semigroup $T$ has the smallest
	two sided ideal
	
	$$
	\begin{aligned}
		K(T) & =  \bigcup\{L:L\text{ is a minimal left ideal of }T\}\\
		&=  \bigcup\{R:R\text{ is a minimal right ideal of }T\}.
	\end{aligned}$$

	Given a minimal left ideal $L$ and a minimal right ideal $R$, $L\cap R$
	is a group, and in particular contains an idempotent. If $p$ and
	$q$ are idempotents in $T$ we write $p\leq q$ if and only if $pq=qp=p$.
	An idempotent is minimal with respect to this relation if and only
	if it is a member of the smallest ideal $K(T)$ of $T$. Given $p,q\in\beta S$
	and $A\subseteq S$, $A\in p\cdot q$ if and only if the set $\{x\in S:x^{-1}A\in q\}\in p$,
	where $x^{-1}A=\{y\in S:x\cdot y\in A\}$. See \cite{HS} for
	an elementary introduction to the algebra of $\beta S$ and for any
	unfamiliar details.
	
	\subsubsection*{Central sets}
 In topological semigroup theory it's smallest two sided ideal enjoys beautiful properties. And using those properties one can derive many monochromatic patterns. The following notions of central sets is defined using the notions of ultrafilters.
	\begin{definition}\cite[Definition 4.42, Page-102]{HS1}
		Let $(S,\cdot)$ be  semigroup and  $A\subseteq S$. Then $A$ is said to be central if and only if there exists a minimal idempotent $p\in \beta S$ such that $A\in p$.
	\end{definition}
For the set of natural numbers the following theorem was proved in \cite{BH}, and for general semigroups in \cite{SY}. 	
	\begin{theorem}\textup{\cite[Theorem 19.27 Page-501]{HS1}}
	Let $(S,\cdot )$ be a semigroup and $B\subseteq S$. Then $B$ is central if and only if $B$ is dynamically central.
	\end{theorem}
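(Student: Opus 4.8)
The plan is to carry out the entire argument inside the algebra of $\beta S$. The guiding idea is to extend the $S$-action of any dynamical system to an action of the whole semigroup $\beta S$, and then translate the two dynamical hypotheses---uniform recurrence and proximality---into purely algebraic statements about minimal idempotents. I would first record three standard ingredients. (L1) For a dynamical system $(X,\langle T_s\rangle_{s\in S})$ and $p\in\beta S$, define $T_p(x)=p\text{-}\lim_s T_s(x)$; then $T_pT_q=T_{pq}$, the map $p\mapsto T_p(x)$ is continuous, and, by the very definition of the $p$-limit, $T_p(x)$ is the unique point such that $\{s\in S:T_s(x)\in U\}\in p$ for every neighborhood $U$ of it. (L2) A point $y$ is uniformly recurrent if and only if there is a minimal idempotent $e\in\beta S$ with $T_e(y)=y$. (L3) The points $x,y$ are proximal if and only if there is some $q\in\beta S$ with $T_q(x)=T_q(y)$; moreover $I:=\{q\in\beta S:T_q(x)=T_q(y)\}$ is a left ideal, hence, when nonempty, contains a minimal left ideal. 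These are the classical bridges between topology and algebra and I would establish or cite them before the main argument.

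For the implication \emph{dynamically central} $\Rightarrow$ \emph{central}, suppose $C=\{s\in S:T_s(x)\in U\}$ where $y$ is uniformly recurrent, $x$ and $y$ are proximal, and $U$ is a neighborhood of $y$. By (L2) fix a minimal idempotent $u$ with $T_u(y)=y$, and by (L3) fix a minimal left ideal $L\subseteq I$. Using the structure theory of $K(\beta S)$, there is an idempotent $e\in L$ with $eu=u$ and $ue=e$; since $e\in L\subseteq K(\beta S)$, it is minimal. Now $e\in L\subseteq I$ gives $T_e(x)=T_e(y)$, while from $eu=u$ we get $T_u=T_{eu}=T_e\circ T_u$, so $y=T_u(y)=T_e(T_u(y))=T_e(y)$. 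Hence $T_e(x)=T_e(y)=y\in U$, and by (L1) this forces $C=\{s:T_s(x)\in U\}\in e$. As $e$ is a minimal idempotent, $C$ is central.

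For the harder converse, \emph{central} $\Rightarrow$ \emph{dynamically central}, let $p$ be a minimal idempotent with $B\in p$. I would build a symbolic system on the compact Hausdorff space $X=\{0,1\}^{S\cup\{*\}}$, where $*$ is a formal symbol acting as a left identity for indexing: define the $S$-action by $(T_s\eta)(t)=\eta(ts)$ for $t\in S$ and $(T_s\eta)(*)=\eta(s)$. Each $T_s$ is continuous, and a direct check gives $T_sT_{s'}=T_{ss'}$. Put $x=1_B$, let $y=T_p(x)$, and let $U=\{\eta\in X:\eta(*)=1\}$, a clopen neighborhood of $y$ since $y(*)=p\text{-}\lim_s 1_B(s)=1$ because $B\in p$. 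Then $T_p(y)=T_{p^2}(x)=T_p(x)=y$, so $y$ is uniformly recurrent by (L2); since $T_p(x)=y=T_p(y)$, the points $x$ and $y$ are proximal by (L3); and finally $\{s\in S:T_s(x)\in U\}=\{s:(T_s 1_B)(*)=1\}=\{s:1_B(s)=1\}=B$. Thus $B$ is dynamically central, and the two notions coincide.

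The main obstacle lies in the converse, and within it in two points. First, one must realize \emph{exactly} $B$, not merely a central superset such as $B^\star=\{s\in S:s^{-1}B\in p\}$, which is what the naive $\beta S$-system with left translations would produce. Second, $S$ may lack an identity, so there is no canonical coordinate at which to read off membership in $B$. Both difficulties are dissolved at once by adjoining the formal left identity $*$: the coordinate $\eta(*)$ recovers $1_B$ precisely, while all the algebra governing uniform recurrence and proximality takes place over the unaltered $\beta S$, where $p$ remains a minimal idempotent. The remaining technical weight is in (L1)--(L3), and especially in the idempotent-selection step producing $e\in L$ with $eu=u$ and $ue=e$, which is exactly where the structure of the smallest ideal $K(\beta S)$ is genuinely invoked.
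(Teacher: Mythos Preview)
Your proof is correct and follows essentially the same route as the paper (which, for this cited semigroup statement, proves only the adequate-partial-semigroup generalisation): both directions rest on the $p$-limit extension $T_p(x)=p\text{-}\lim_s T_s(x)$, the algebraic characterisations of proximality and uniform recurrence via (minimal) idempotents, and the symbolic shift system $\{0,1\}^{S\cup\{e\}}$ with $x=1_B$, $y=T_p(x)$, and the clopen neighborhood read off at the adjoined identity coordinate. The only cosmetic difference is that for the implication \emph{dynamically central} $\Rightarrow$ \emph{central} the paper invokes directly the lemma ``$y$ uniformly recurrent $\Leftrightarrow$ for \emph{every} minimal left ideal $L$ there is an idempotent $e\in L$ with $T_e(y)=y$'' and applies it to the minimal left ideal $L\subseteq I$, whereas you start from some minimal idempotent $u$ fixing $y$ and then transfer to $L$ via the structure-theoretic choice of $e\in L$ with $eu=u$, $ue=e$; these are equivalent standard manoeuvres.
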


 \subsubsection{Partial semigroup}
	A partial semigroup is a pair $\left(S,\cdot\right),$ where `$\cdot$' maps a subset of $S\times S$ to $S$ and for all $a,b,c\in S$, $\left(a\cdot b\right)\cdot c=a\cdot\left(b\cdot c\right)$ in the sense that if either side is defined, then so is the other and they are equal.
	\begin{ex}\label{e}\textcolor{white}{}
	\begin{itemize}
	\item[(1)] Let $\mathcal{R} = \left\{A: \text{ there exists } m,n\in \mathbb{N}  \text{ such that } A  \text{ is an }  m\times \text{ matrix with entries from }  \mathbb{Z}\right\}$, with the usual matrix multiplication. We know that for an $m\times n$ matrix $M$ and an $m^{\prime}\times n^{\prime}$ matrix $N$ in $\mathcal{R}$, $M\cdot N$ is defined if and only if $n=m^{\prime}$. so if we define `$\cdot$' as follows: 
	$$ M=
	\begin{cases}
		M\cdot N \text{ if } n=m^{\prime}\\
		\text{undefined otherwise}
	\end{cases}$$ then $\left(\mathcal{R},\cdot\right)$ is a partial semigroup.
	
	\item[(2)] Given a sequence $\langle x_{n}\rangle_{n=1}^{\infty}$ in some semigroup $\left(S,\cdot\right)$, let $T=FP\left(\langle x_{n}\rangle_{n=1}^{\infty}\right)$, where $$FP\left(\langle x_{n}\rangle_{n=1}^{\infty}\right)=\left\{\prod_{n\in F}x_{n}:F\in \mathcal{P}_{f}\left(\mathbb{N}\right)\right\}$$ and product are taken in increasing order of indices. then $T$ is not likely to be closed under $\cdot$. on the other hand, if we take,  $$ \left(\prod_{n\in F}x_{n}\right)\cdot\left(\prod_{n\in G}x_{n}\right)=
	\begin{cases}
		\prod_{n\in{F\cup G}}x_{n} \text{ if } \max{F}<\min{G}\\
		\text{ undefined } \text{ if } \max{F}\geq \min{G}
	\end{cases}$$ Then $\left(T,\cdot\right)$ is a partial semigroup.
				
	\end{itemize}
 \end{ex}
The study of ``partial semigroup'' has an important role in Ramsey theory. For details see \cite{HFM}. For combinatorial application, we rely on the study of some special class of partial semigroups: ``adequate partial semigroups''.
	\begin{definition}[Adequate partial semigroup]
		Let $\left(S,\cdot\right)$ be a partial semigroup.
		\begin{itemize}
			\item[(i)] For $s\in S$, $\phi\left(t\in S:s\cdot t \text{ is defined }\right)$.
			
			\item[(ii)] For $H\in \mathcal{P}_{f}\left(S\right)$,  $\sigma\left(H\right)=\cap_{s\in H}\phi \left(s\right)$
			
			\item[(iii)] $\left(S,\cdot\right)$ is adequate if and only if $\sigma\left(H\right)\neq \emptyset$ for all $H\in \mathcal{P}_{f}\left(S\right)$.
		\end{itemize}
		
	\end{definition}
 In Example \ref{e}, one can verify that unlike $\left(\mathcal{R},\cdot\right)$, the partial semigroup $\left(T,\cdot \right)$ is adequate. In the case of $\left(\mathcal{R},\cdot\right)$, notice that for any $H\in \mathcal{P}_{f}\left(\mathcal{R}\right)$, $\sigma\left(H\right)\neq \emptyset$ if and only if $\mathcal{H}=\left\{A:A \text{ is a matrix with } r \text{ columns for some fixed } r\in\mathbb{N} \right\}$.
 
\subsubsection{Algebra of the Stone-\v{C}ech compactification of discrete partial semigroups}

Note that if $(S,\cdot)$ is a partial semigroup, and $\beta S$ is the set of all ultrafilters over $S,$ then $(\beta S,\cdot)$ is also a partial semigroup.
 One of the main advantage of ``adequate partial semigroup'' is that we can find a semigroup structure inside $\beta S$, which is not possible for general partial semigroups.
 \begin{definition}
 	Let $\left(S,\cdot\right)$ be a partial semigroup. Then $$\delta S=\bigcap_{x\in S}\overline{\phi\left(x\right)}=\bigcap_{H\in \mathcal{P}_{f}\left(S\right)}\overline{\sigma\left(H\right)}.$$
 \end{definition}
 
Clearly $\delta S\subseteq \beta S$. Most interesting fact is that this set $\delta S$ along with the operation `$\star$' forms a semigroup. For $x\in S$ and $A\subseteq S$, define $x^{-1}A=\left\{y\in \phi\left(x\right):x\cdot y\in A\right\}.$
 The following lemma describes the algebraic structure of  ``adequate partial semigroups''.
 
  \begin{lemma}\textup{\cite[Lemma 2.4]{HM}}
 	Let $S$ be an adequate partial semigroup.
 	\begin{itemize}
 		\item[(a)] Let $x\in S$, let $q\in \overline{\phi\left(x\right)}$, and let $A \subseteq S$. Then $$ A\in x\cdot q\iff x^{-1}A\in q,$$
 		\item[(b)] Let $p\in \beta S$, let $q\in \delta S$, and let $A\subseteq S$. Then $$A\in p\cdot q\iff \left\{x\in S:x^{-1}A\in q\right\}\in p.$$
 	\end{itemize}
 	
 \end{lemma}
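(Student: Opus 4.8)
The plan is to derive both statements from the standard description of the operation on $\beta S$ as a continuous extension, together with the triviality that membership in a principal ultrafilter is ordinary set membership. Recall the well-known fact (see \cite{HS1}) that for the discrete space $S$, a compact Hausdorff space $T$, a subset $Y\subseteq S$, and a map $f\colon Y\to T$, the continuous extension $\widetilde{f}\colon\overline{Y}\to T$ is given by the $q$-limit $\widetilde{f}(q)=q\text{-}\lim_{y\in Y}f(y)$; when $T=\beta S$ this unwinds, using that the sets $\overline{A}$ form a clopen basis, to the neighborhood criterion $A\in\widetilde{f}(q)\iff\{y\in Y:A\in f(y)\}\in q$, valid for every $q\in\overline{Y}$ and every $A\subseteq S$. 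The whole proof is an application of this criterion in the two relevant domains.

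For part (a), I would take $Y=\phi(x)$ and let $f=\lambda_x$ be the map $y\mapsto x\cdot y$, which is a principal ultrafilter and is well defined precisely because $y\in\phi(x)$. Since $q\in\overline{\phi(x)}$, the element $x\cdot q$ is by definition $\widetilde{\lambda_x}(q)$, so the criterion above gives $A\in x\cdot q\iff\{y\in\phi(x):A\in x\cdot y\}\in q$. As $x\cdot y$ is principal, $A\in x\cdot y$ is equivalent to $x\cdot y\in A$, so the indexing set on the right is exactly $\{y\in\phi(x):x\cdot y\in A\}=x^{-1}A$, and part (a) follows at once.

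For part (b), the key preliminary observation is that the hypothesis $q\in\delta S=\bigcap_{x\in S}\overline{\phi(x)}$ forces $q\in\overline{\phi(x)}$ for every $x\in S$, so by part (a) the map $g\colon S\to\beta S$ given by $g(x)=x\cdot q$ is defined on all of $S$. The product $p\cdot q$ is then $\widetilde{g}(p)=\rho_q(p)$, and applying the neighborhood criterion with $Y=S$ yields $A\in p\cdot q\iff\{x\in S:A\in x\cdot q\}\in p$. Substituting the equivalence from part (a), namely $A\in x\cdot q\iff x^{-1}A\in q$, rewrites the indexing set as $\{x\in S:x^{-1}A\in q\}$, which is exactly the claim.

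I expect the only genuinely delicate point to be the bookkeeping about domains: one must check that $\lambda_x$ extends continuously on $\overline{\phi(x)}$ rather than on all of $\beta S$, and that $\delta S$ really sits inside each $\overline{\phi(x)}$ so that $g$ is everywhere defined. It is adequacy of $S$ that keeps each $\overline{\phi(x)}$ (and hence $\delta S$) nonempty and makes these extensions meaningful; once that is in place, everything else is the mechanical passage between $q$-limits and the clopen-neighborhood criterion, plus the observation that membership in a principal ultrafilter is ordinary membership.
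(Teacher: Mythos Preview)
Your argument is correct: unwinding the continuous extensions $\lambda_x$ on $\overline{\phi(x)}$ and $\rho_q$ on $\beta S$ via the $q$-limit/clopen-basis criterion, and then reading off $x^{-1}A=\{y\in\phi(x):x\cdot y\in A\}$, is exactly how this goes. Note, however, that the paper does not supply its own proof of this lemma at all; it is quoted verbatim from \cite[Lemma~2.4]{HM}, so there is nothing in the present paper to compare your approach against. Your write-up is essentially the standard verification one finds in that reference, with the domain bookkeeping (that $q\in\delta S$ forces $q\in\overline{\phi(x)}$ for every $x$) made explicit.
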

 The following theorem says that $\delta S$ is a compact Hausdorff right topological semigroup.
 \begin{theorem}\textup{\cite[Theorem 2.6]{HM}}
 	Let $S$ be the adequate partial semigroup. Then the restriction of the operation given in Theorem \ref{Operation}, $\delta S$ is a compact right topological semigroup.
 \end{theorem}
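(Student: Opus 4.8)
The plan is to verify the four ingredients in turn: that $\delta S$ is compact Hausdorff, that it is closed under the operation, that the restricted operation is associative, and that right translations are continuous. The topological part is immediate and I would dispatch it first. Since $\delta S=\bigcap_{x\in S}\overline{\phi(x)}$ is an intersection of basic closed subsets of $\beta S$, it is a closed subspace of the compact Hausdorff space $\beta S$, hence itself compact and Hausdorff. I would also note nonemptiness, since it is what makes the semigroup interesting: for $H_1,\dots,H_n\in\mathcal{P}_f(S)$ set $H=H_1\cup\cdots\cup H_n$, so that $\sigma(H)\subseteq\sigma(H_i)$ and $\emptyset\neq\overline{\sigma(H)}\subseteq\bigcap_i\overline{\sigma(H_i)}$ by adequacy; the family $\{\overline{\sigma(H)}\}_H$ then has the finite intersection property and compactness yields $\delta S\neq\emptyset$.

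The heart of the argument is closure, and this is where I would spend the most care. Fix $p,q\in\delta S$; the goal is $p\cdot q\in\delta S$, i.e. $\phi(y)\in p\cdot q$ for every $y\in S$. Since $q\in\delta S$, part (b) of the lemma above applies and gives $\phi(y)\in p\cdot q\iff\{x\in S:x^{-1}\phi(y)\in q\}\in p$. The key computation is the identity
\[
x^{-1}\phi(y)=\phi(y\cdot x)\qquad\text{whenever }x\in\phi(y).
\]
Indeed, for such $x$ the product $y\cdot x$ is defined, and for $z\in S$ partial associativity $(y\cdot x)\cdot z=y\cdot(x\cdot z)$ says that $z\in\phi(y\cdot x)$ holds exactly when $x\cdot z$ is defined and $x\cdot z\in\phi(y)$, which is precisely the condition $z\in x^{-1}\phi(y)$. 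Because $p,q\in\delta S$ they contain every set of the form $\phi(w)$, so $x^{-1}\phi(y)=\phi(y\cdot x)\in q$ for each $x\in\phi(y)$. Hence $\{x:x^{-1}\phi(y)\in q\}\supseteq\phi(y)\in p$, so it lies in $p$, giving $\phi(y)\in p\cdot q$. As $y$ is arbitrary, $p\cdot q\in\bigcap_{y}\overline{\phi(y)}=\delta S$.

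With closure established, associativity is essentially free. It is recorded above that $(\beta S,\cdot)$ is a partial semigroup, so $\cdot$ is associative as a partial operation. For $p,q,r\in\delta S$ the products $q\cdot r$ and $p\cdot q$ again lie in $\delta S$, and by part (b) of the lemma above every product whose right factor lies in $\delta S$ is defined; thus both $(p\cdot q)\cdot r$ and $p\cdot(q\cdot r)$ are defined, and partial associativity forces them to coincide. Finally, for right topologicality I would fix $p\in\delta S$ and consider $\rho_p(q)=q\cdot p$, which is defined on all of $\beta S$ since $p\in\delta S$. For a basic open set $\overline{A}$ one computes, using part (b) of the lemma above, $\rho_p^{-1}[\overline{A}]=\{q:A\in q\cdot p\}=\{q:\{x:x^{-1}A\in p\}\in q\}=\overline{\{x:x^{-1}A\in p\}}$, which is clopen; hence $\rho_p$ is continuous and so is its restriction to $\delta S$.

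I expect the main obstacle to be the closure step, and within it the careful bookkeeping of the definedness conditions in the identity $x^{-1}\phi(y)=\phi(y\cdot x)$: one must apply the partial-semigroup associativity in its precise form ``if either side is defined then so is the other'' rather than treating $\cdot$ as a total operation, and one must keep the left/right roles straight so that part (b) of the lemma above is invoked only with its right factor lying in $\delta S$.
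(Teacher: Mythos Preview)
The paper does not supply its own proof of this statement; it is quoted as \cite[Theorem 2.6]{HM} and used as background input, so there is nothing in the paper to compare against directly.

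Your argument is correct and is essentially the standard one found in \cite{HM}. Closedness of $\delta S=\bigcap_{x\in S}\overline{\phi(x)}$ in $\beta S$ handles compactness and nonemptiness; the identity $x^{-1}\phi(y)=\phi(y\cdot x)$ for $x\in\phi(y)$, together with the cited Lemma (part (b)), gives closure of $\delta S$ under the extended product; associativity then follows from closure plus the partial associativity already recorded for $(\beta S,\cdot)$; and right-continuity is exactly Theorem~\ref{Operation}(b), which you also verify directly via $\rho_p^{-1}[\overline{A}]=\overline{\{x:x^{-1}A\in p\}}$. The only place one must be careful is precisely where you flag it: the two-sided reading of partial associativity in establishing $x^{-1}\phi(y)=\phi(y\cdot x)$, and you handle that correctly.
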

 
 The fact that $\delta S$ is a compact right topological semigroup provides a natural context for the notion of ``central'' sets  in an adequate partial semigroup.
 \begin{definition}\label{a}
 	Let $S$ be an adequate partial semigroup and let $A\subseteq S$. Then $A$ is central if and only if there is some minimal idempotent $p\in \delta S$ such that $A\in p$.
 \end{definition}
 
\subsection*{Structure of the paper:} Introducing the notions of topological dynamics for partial semigroup action, we define dynamical central sets in partial semigroups. Then we prove the notion is equivalent with the Definition \ref{a}. The same question was addressed in \cite{GTG}, but their approach is different and complicated.


	\section{Dynamical characterization of central sets}
	
Before we proceed let us introduce the notions of a dynamical system for the action of an adequate partial semigroup. This generalizes Definition \ref{dynamical system}. One may ask about the existence of such dynamical system, which they can find in the Lemma \ref{shift of sequence}.
		\begin{definition}\label{dynamical system for partial }
		Let $S$ be an adequate partial semigroup. A dynamical system is a pair $\left(X,\langle T_s\rangle_{s\in S}\right)$ such that 
		\begin{itemize}
			\item[$\left(i\right)$] $X$ is a compact Hausdroff space,
			\item[(ii)]For each $s$, $T_{s}:X\rightarrow X$ and $T_{s}$ is continuous, and
			\item[(iii)] For all $s,t\in S$, $T_{s} T_{t}=T_{st}$ if $t\in \phi\left(s\right)$.
		\end{itemize}
	\end{definition}

In partial semigroup, we can define syndetic sets in two different ways. The first one is in the sense of a combinatorial approach, known as $\check{c}$-syndetic sets. 
	\begin{definition}{\cite[Definition 2.13]{HM}}\label{syndetic in partial}
		Let $\left(S,\cdot\right)$ be an adequate partial semigroup and let $A\subseteq S$. The set $A$ is $\check{c}$-syndetic if and only if there is some $H\in \mathcal{P}_{f}\left(S\right)$ such that $\sigma\left(H\right)\subseteq\bigcup_{t\in H}t^{-1}A$.
	\end{definition}
The following lemma gives a characterization of	$\check{c}$-syndetic sets.

	\begin{lemma}\textup{\cite[Lemma 2.14]{HM}}
		Let $\left(S,\cdot\right)$ be an adequate partial semigroup and let $A\subseteq S$. Then $A$ is $\check{c}$-syndetic  if and only if there exists $H\in \mathcal{P}_{f}\left(S\right)$ such that $\delta S\subseteq\bigcup_{t\in H}\overline{t^{-1}A}$.
	\end{lemma}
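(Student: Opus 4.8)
The plan is to prove the two implications separately, relying throughout on the standard dictionary between subsets of $S$ and basic clopen subsets of $\beta S$: for $X,Y\subseteq S$ one has $\overline{X\cup Y}=\overline{X}\cup\overline{Y}$, $\overline{X\cap Y}=\overline{X}\cap\overline{Y}$, $\overline{S\setminus X}=\beta S\setminus\overline{X}$, and $\overline{X}\subseteq\overline{Y}$ if and only if $X\subseteq Y$. These will convert the two combinatorial conditions into one another.

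For the forward implication, suppose $A$ is $\check{c}$-syndetic with witness $H$, so that $\sigma(H)\subseteq\bigcup_{t\in H}t^{-1}A$. I would take an arbitrary $p\in\delta S$; since $\delta S\subseteq\overline{\sigma(H)}$ we have $\sigma(H)\in p$, hence $\bigcup_{t\in H}t^{-1}A\in p$ as a superset of a member of $p$. Because $p$ is an ultrafilter and $H$ is finite, some single $t^{-1}A\in p$, i.e. $p\in\overline{t^{-1}A}$. Thus the same $H$ already witnesses $\delta S\subseteq\bigcup_{t\in H}\overline{t^{-1}A}$, and this direction needs no compactness.

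The converse is the main obstacle and is where a compactness argument enters. Starting from $\delta S=\bigcap_{F\in\mathcal{P}_f(S)}\overline{\sigma(F)}\subseteq\bigcup_{t\in H}\overline{t^{-1}A}$, I would pass to complements in $\beta S$. Writing $B=S\setminus\bigcup_{t\in H}t^{-1}A$, the left-hand side becomes the compact (closed) set $\overline{B}=\bigcap_{t\in H}\overline{S\setminus t^{-1}A}$, and the inclusion turns into $\overline{B}\subseteq\bigcup_{F}\overline{S\setminus\sigma(F)}$. Each $\overline{S\setminus\sigma(F)}$ is clopen, so these sets form an open cover of the compact set $\overline{B}$; moreover, since $\sigma(F_1\cup F_2)=\sigma(F_1)\cap\sigma(F_2)$, the family is directed under union, as $\overline{S\setminus\sigma(F_1)}\cup\overline{S\setminus\sigma(F_2)}=\overline{S\setminus\sigma(F_1\cup F_2)}$. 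Compactness together with directedness then collapses the cover to a single $F$ with $\overline{B}\subseteq\overline{S\setminus\sigma(F)}$, which at the level of $S$ reads $B\subseteq S\setminus\sigma(F)$, that is, $\sigma(F)\subseteq\bigcup_{t\in H}t^{-1}A$.

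It remains to reconcile the index sets, since the definition of $\check{c}$-syndetic demands the \emph{same} finite set in $\sigma(\,\cdot\,)$ and in the union, whereas I have obtained $\sigma(F)\subseteq\bigcup_{t\in H}t^{-1}A$ with possibly $F\neq H$. Setting $K=F\cup H$ repairs this: $\sigma(K)=\sigma(F)\cap\sigma(H)\subseteq\sigma(F)\subseteq\bigcup_{t\in H}t^{-1}A\subseteq\bigcup_{t\in K}t^{-1}A$, so $K$ witnesses that $A$ is $\check{c}$-syndetic. The one point demanding genuine care is the converse, where both the compactness of $\overline{B}$ and the directedness of the cover $\{\overline{S\setminus\sigma(F)}\}$ must be used in order to replace the intersection over all of $\mathcal{P}_f(S)$ defining $\delta S$ by a single finite set $F$.
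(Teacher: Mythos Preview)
Your argument is correct. Both directions are handled cleanly: the forward implication is immediate from $\delta S\subseteq\overline{\sigma(H)}$ and the fact that ultrafilters respect finite unions, while for the converse you correctly reduce to a compactness argument on $\overline{B}$ covered by the directed open family $\{\overline{S\setminus\sigma(F)}\}_{F\in\mathcal{P}_f(S)}$, and then enlarge $H$ to $K=F\cup H$ so that the same finite set appears on both sides of the $\check{c}$-syndetic condition.

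There is, however, nothing in the present paper to compare against: the lemma is quoted from \cite[Lemma~2.14]{HM} and no proof is reproduced here. Your proof is in fact the standard one (and essentially the one given in \cite{HM}): pass between subsets of $S$ and clopen subsets of $\beta S$, and use compactness to extract a single $F$ from the intersection defining $\delta S$. The only nontrivial step is exactly the one you flagged, namely the compactness/directedness reduction in the converse, and your handling of it is sound.
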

The following notions of syndetic set can be defined using the structure of $\delta S.$
\begin{definition}\label{ algebraic syndetic in partial}
		Let $S$ be an adequate partial semigroup and let $A\subseteq S$. Then $A$  is  syndetic if  for every left ideal $L$ of $\delta S$, $\overline{A}\cap L\neq\emptyset$.
	\end{definition}
However one can prove that both of the above notions are not same.
\begin{theorem}{\cite[Theorem 3.4]{M1}}
    There exists an adequate partial semigroup $\left(T,\cdot\right)$ and a $\check{c}$-syndetic subset $A$ of $T$ which is not syndetic.
\end{theorem}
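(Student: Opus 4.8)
The plan is to exhibit one concrete adequate partial semigroup together with a set that is $\check{c}$-syndetic for an almost trivial reason yet meets no left ideal of $\delta S$. The guiding observation is that the two definitions test membership of $A$ in genuinely different places. Unwinding Definition~\ref{syndetic in partial} through the characterization lemma (\cite[Lemma 2.14]{HM}), $A$ is $\check{c}$-syndetic exactly when $\delta S\subseteq\bigcup_{t\in H}\overline{t^{-1}A}$, i.e. every $p\in\delta S$ satisfies $t^{-1}A\in p$ for some $t\in H$; by part (a) of \cite[Lemma 2.4]{HM} this says $A\in t\cdot p$. The crucial point is that the left factor here comes from $t\in S$, and $t$ viewed in $\beta S$ need not lie in $\delta S$ (that would require $x\cdot t$ to be defined for every $x$, which typically fails). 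So $t\cdot p$ escapes $\delta S$, and covering $\delta S$ by such translates imposes no constraint on $\overline A\cap\delta S$. By contrast, the algebraic notion in Definition~\ref{ algebraic syndetic in partial} demands that $\overline A$ meet every left ideal $L\subseteq\delta S$. It therefore suffices to produce an $A$ with $\overline A\cap\delta S=\emptyset$ that is nonetheless $\check{c}$-syndetic.

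For the construction I would take $T=\mathcal P_f(\mathbb N)$ with the partial operation $F\cdot G=F\cup G$ defined precisely when $\max F<\min G$ (the special case $x_n=\{n\}$ of Example~\ref{e}(2)); it is adequate because $\sigma(H)=\{G:\min G>\max\bigcup H\}\neq\emptyset$ for every finite $H$. Here $\phi(\{1\})=\{G:\min G>1\}$, and I would set $A=T\setminus\phi(\{1\})=\{F:1\in F\}$. First, $A$ is $\check{c}$-syndetic with witness $H=\{\{1\}\}$: for any $G\in\phi(\{1\})$ the product $\{1\}\cdot G=\{1\}\cup G$ contains $1$, so $\{1\}\cup G\in A$; hence $\{1\}^{-1}A=\phi(\{1\})=\sigma(\{\{1\}\})$, and thus $\sigma(H)\subseteq\bigcup_{t\in H}t^{-1}A$.

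Second, $A$ is not syndetic. Since $\delta S=\bigcap_{x\in S}\overline{\phi(x)}$, every $p\in\delta S$ contains $\phi(\{1\})$, whence $A=\phi(\{1\})^{c}\notin p$; that is, $\overline A\cap\delta S=\emptyset$. By \cite[Theorem 2.6]{HM} the set $\delta S$ is a nonempty compact right topological semigroup, so it possesses minimal left ideals, and each left ideal $L$ satisfies $L\subseteq\delta S$. Consequently $\overline A\cap L=\emptyset$ for every left ideal $L$, so $A$ fails Definition~\ref{ algebraic syndetic in partial}, and the theorem follows.

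The main work is conceptual rather than computational: the obstacle is to isolate the correct invariant, namely that the honest gap between the two notions is whether $\overline A$ meets $\delta S$ at all, a feature invisible to the combinatorial covering because that covering is implemented by principal ultrafilters lying outside $\delta S$. Once this is recognized, the only points needing care are routine: checking adequacy of $T$, confirming that $\delta S$ is nonempty so that left ideals genuinely exist (a standard finite-intersection-property argument for the closed sets $\overline{\sigma(H)}$ in $\beta S$), and verifying that the witnessing $H$ for $\check{c}$-syndeticity is the same finite set appearing on both sides of Definition~\ref{syndetic in partial}. I would also note that $\{1\}$ may be replaced by any fixed $s_0\in S$, so the example is robust.
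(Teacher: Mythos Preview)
The paper does not supply its own proof of this statement; it merely quotes the result from McLeod \cite[Theorem 3.4]{M1} and moves on. So there is nothing in the present paper to compare your argument against.

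That said, your proof is correct and self-contained. The conceptual diagnosis is exactly right: $\check{c}$-syndeticity asks that $\sigma(H)\subseteq\bigcup_{t\in H}t^{-1}A$, and the translates $t\cdot p$ with $t\in S$ and $p\in\delta S$ need not lie in $\delta S$; hence a $\check{c}$-syndetic set may be entirely disjoint from $\delta S$. Your choice $T=\mathcal P_f(\mathbb N)$ with $F\cdot G=F\cup G$ for $\max F<\min G$, and $A=\{F:1\in F\}$, realizes this cleanly. The verification that $H=\{\{1\}\}$ witnesses $\check{c}$-syndeticity is immediate, and the observation that $A=T\setminus\phi(\{1\})$ forces $\overline A\cap\delta S=\emptyset$ (so \emph{a fortiori} $\overline A\cap L=\emptyset$ for every left ideal $L$) is the decisive step. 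The only minor remark is that your appeal to the existence of a left ideal is slightly stronger than needed: once $\overline A\cap\delta S=\emptyset$, the failure of Definition~\ref{ algebraic syndetic in partial} follows because $\delta S$ is itself a left ideal of $\delta S$; you do not need minimal left ideals or the full Ellis theory, only the nonemptiness of $\delta S$, which is immediate from adequacy.
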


 For our purpose, we need the following variations of syndetic sets.
	\begin{definition}[$\check{a}$-syndetic set]\label{new syndetic set}
		Let $S$ be an adequate partial semigroup and let $A\subseteq S$. then $A$  is said to be  $\check{a}$-syndetic if for every $G\in \mathcal{P}_{f}\left(S\right)$, there exists $H\in\mathcal{P}_{f}\left(\sigma\left(G\right)\right)$ such that $\sigma\left(H\cup K\right)\subseteq \bigcup_{t\in H}{t^{-1}A}$ for some $K\in \mathcal{P}_{f}\left(S\right)$.
	\end{definition}
The following theorem gives us the combinatorial characterization of syndetic sets.
 
	\begin{theorem}
		Let $S$ be an adequate partial semigroup and let $A\subseteq S$. Then $A$  is  $\check{a}$-syndetic if  and only if for every left ideal $L$ of $\delta S$, $\overline{A}\cap L\neq\emptyset$.

	\end{theorem}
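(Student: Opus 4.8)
The plan is to prove both implications by translating the set-theoretic condition into a statement about products $q\cdot p$ inside $\delta S$ through Lemma 2.4(b), and then exploiting the compactness of $\delta S=\bigcap_{H\in\mathcal{P}_f(S)}\overline{\sigma(H)}$. The two facts I will lean on repeatedly are: (1) for $q\in\delta S$ and $p\in\delta S$ one has $A\in q\cdot p\iff\{x\in S:x^{-1}A\in p\}\in q$; and (2) every $q\in\delta S$ satisfies $\sigma(H)\in q$ for all $H\in\mathcal{P}_f(S)$. I will also silently use $\sigma(H_1)\cap\cdots\cap\sigma(H_n)=\sigma(H_1\cup\cdots\cup H_n)$ and $\overline{X}\cap\overline{Y}=\overline{X\cap Y}$ in $\beta S$. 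A structural point that avoids the usual difficulty of partial semigroups (namely that a principal ultrafilter $e(t)$ need not lie in $\delta S$, so $t\cdot p$ need not stay in a left ideal $L$) is that I never left-multiply by single elements of $S$; instead I work with the genuine left ideal $\delta S\cdot p\subseteq L$.

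For the forward implication, assume $A$ is $\check{a}$-syndetic, let $L$ be a left ideal of $\delta S$, pick $p\in L$, and suppose toward a contradiction that $\overline{A}\cap L=\emptyset$. Since $\delta S\cdot p\subseteq\delta S\cdot L\subseteq L$, we get $A\notin q\cdot p$ for every $q\in\delta S$. By Lemma 2.4(b) this says that the set $B=\{x\in S:x^{-1}A\in p\}$ satisfies $B\notin q$ for all $q\in\delta S$, i.e. $\overline{B}\cap\delta S=\emptyset$. Now apply compactness: the closed sets $\overline{B}$ and $\{\overline{\sigma(H)}:H\in\mathcal{P}_f(S)\}$ have empty total intersection, so some finite subfamily does, and collapsing the $\sigma$'s into a single $H'\in\mathcal{P}_f(S)$ yields $\overline{B}\cap\overline{\sigma(H')}=\emptyset$, hence $B\cap\sigma(H')=\emptyset$; that is, $x^{-1}A\notin p$ for every $x\in\sigma(H')$. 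Finally I invoke $\check{a}$-syndeticity with the choice $G=H'$: it produces $H\in\mathcal{P}_f(\sigma(H'))$ and $K$ with $\sigma(H\cup K)\subseteq\bigcup_{t\in H}t^{-1}A$. Since $p\in\delta S$ forces $\sigma(H\cup K)\in p$, some $t\in H$ has $t^{-1}A\in p$; but $t\in\sigma(H')$, contradicting the previous line. Hence $\overline{A}\cap L\neq\emptyset$.

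For the reverse implication I argue by contrapositive: assume $A$ is not $\check{a}$-syndetic, so there is a fixed $G\in\mathcal{P}_f(S)$ such that for all $H\in\mathcal{P}_f(\sigma(G))$ and all $K$ one has $\sigma(H\cup K)\setminus\bigcup_{t\in H}t^{-1}A\neq\emptyset$. I then form the family $\{\sigma(F):F\in\mathcal{P}_f(S)\}\cup\{S\setminus t^{-1}A:t\in\sigma(G)\}$ and check the finite intersection property: a typical finite intersection equals $\sigma(K)\setminus\bigcup_{t\in H}t^{-1}A$ for some $H\subseteq\sigma(G)$ and some $K$ (absorbing $G$ into $K$ to keep it nonempty), and this contains the set $\sigma(H\cup K)\setminus\bigcup_{t\in H}t^{-1}A$, which is nonempty by the failure of $\check{a}$-syndeticity. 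Any ultrafilter $p$ extending this family lies in $\delta S$ and satisfies $t^{-1}A\notin p$ for all $t\in\sigma(G)$. I claim $L=\delta S\cdot p$ is a left ideal of $\delta S$ with $\overline{A}\cap L=\emptyset$: for $q\in\delta S$, Lemma 2.4(b) gives $A\in q\cdot p\iff\{x:x^{-1}A\in p\}\in q$; but $\{x:x^{-1}A\in p\}\subseteq S\setminus\sigma(G)$, while $q\in\delta S$ forces $\sigma(G)\in q$, so this set is not in $q$ and $A\notin q\cdot p$.

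I expect the main obstacle to be the forward direction's compactness extraction followed by the correct instantiation of the quantifier $G$: the definition of $\check{a}$-syndeticity only guarantees a witness $t$ lying in $\sigma(G)$, so the whole argument hinges on arranging that the ``bad'' region $\sigma(H')$ produced by compactness is exactly the set fed in as $G$, which is what makes the extracted $t$ land where we have already ruled out $t^{-1}A\in p$. The verification of the finite intersection property in the reverse direction (in particular the degenerate cases where the chosen $H$ or the list of $\sigma(F)$'s is empty) is routine but must be handled by always padding $K$ with $G$; I do not anticipate genuine difficulty there.
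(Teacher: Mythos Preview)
Your proof is correct and follows essentially the same approach as the paper. The reverse direction is identical in spirit (build $p\in\delta S$ via the finite intersection property so that $t^{-1}A\notin p$ for all $t\in\sigma(G)$, then observe $\overline A\cap\delta S\cdot p=\emptyset$); in the forward direction you phrase as a contradiction-plus-compactness extraction what the paper does as a direct FIP construction of $q\in\delta S$, but the underlying idea---that $\check a$-syndeticity forces $\{t:t^{-1}A\in p\}$ to meet every $\sigma(G)$ and hence to have closure meeting $\delta S$---is the same.
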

	
	\begin{proof}
		Let $A\subseteq S$ is syndetic and $L$ is any left ideal $L$ of $\delta S$. Let $p\in L$. For each $G\in \mathcal{P}_{f}\left(S\right)$, pick  $H\in\mathcal{P}_{f}\left(\sigma\left(G\right)\right)$ such that  $\sigma\left(H\cup K\right)\subseteq \bigcup_{t\in H}{t^{-1}A}$ for some $K\in \mathcal{P}_{f}\left(S\right)$. Then there exists $t_{G}\in \sigma\left(G\right)$ such that $t_{G}^{-1}A\in p$. Let $$E=\left\{t_{G}:G\in \mathcal{P}_{f}\left(S\right)\right\}\cup\left\{\sigma\left(G\right):G\in \mathcal{P}_{f}\left(S\right)\right\}$$ has finite intersection property. So there exist $q\in \beta S$ such that $E\subseteq q$. As $\sigma\left(G\right)\in q$ for all $G\in \mathcal{P}_{f}\left(S\right)$, then $q\in \delta S$. Now $ \left\{t_{G}:G\in \mathcal{P}_{f}\left(S\right)\right\}\in q$. As $$ \left\{t_{G}:G\in \mathcal{P}_{f}\left(S\right)\right\}\subseteq \left\{t\in S:t^{-1}A\in p\right\},$$ so $\left\{t\in S:t^{-1}A\in p\right\}\in q$. Thus $A\in qp$. Also $qp\in L$, as $p\in L$ and $L$ is a left ideal of $\delta S$. So $qp\in\overline{A}\cap L\neq\emptyset$.\\

		\noindent\textbf{Conversely} assume that $\overline{A}\cap L\neq\emptyset$ for every left ideal $L$ of $\delta S$. Suppose that $A$ is not syndetic. So, there exists $G\in \mathcal{P}_{f}\left(S\right)$, for all $H\in\mathcal{P}_{f}\left(\sigma\left(G\right)\right)$ such that $\sigma\left(H\cup K\right)\setminus \bigcup_{t\in H}{t^{-1}A}\neq\emptyset$ for every $K\in \mathcal{P}_{f}\left(S\right)$. Then the set $$E=\left\{\sigma\left(H\right)\setminus\bigcup_{t\in H}t^{-1}A:H\in\mathcal{P}_{f}\left(\sigma\left(G\right)\right)\right\}\bigcup\left\{\sigma\left(K\right):K\in\mathcal{P}_{f}\left(S\right)\right\}$$ has finite intersection property. Now pick $p\in \delta S$ such that $E\subseteq p$. As $Lp$ is a left ideal of $\delta S$ and $A$ is algebraically syndetic, $\overline{A}\cap Lp\neq\emptyset$, then pick $q\in L$ such that $A\in qp$, which implies $\left\{x\in S:x^{-1}A\in p\right\}\in q$. Also $\sigma\left(G\right)\in q$ for all  because of $q\in \delta S$. So there exist $$x\in \left\{x\in S:x^{-1}A\in p\right\}\cap\sigma\left(G\right)\in q$$ such that $x^{-1}A\in p$ with $x\in \sigma\left(G\right)$, which contradicts the fact that  $\sigma\left(\left\{x\right\}\right)\setminus x^{-1}A\in p$
		
	\end{proof}
Now we introduce the notions of  uniformly recurrent and proximal points for adequate partial semigroup actions.
	\begin{definition}\label{uniform and proximality}
		Let $(S,\cdot)$ be an adequate partial semigroup and let $\left(X,\langle T_{s}\rangle_{s\in S}\right)$ be a dynamical system.
		
		\begin{itemize}
			\item[(i)] A point $y\in S$ is uniformly recurrent if and only if for every neighborhood $U$ of $y$, $\left\{s\in S:T_{s}\left(y\right)\in U\right\} $ is syndetic.
			\item[(ii)]The points $x$ and $y$ of $X$ are proximal if and only if for every neighborhood $U$ of the diagonal in $X\times X$ and for each $H\in \mathcal{P}_{f}\left(S\right)$ there is some $s\in \sigma\left(H\right)$ such that $\left(T_{s}\left(x\right),T_{s}\left(y\right)\right)\in U$.
		\end{itemize}
	\end{definition}

 Let us introduce the notions of dynamical central sets in partial semigroups, which is an analogous version of Definition \ref{key}.
	\begin{definition}\label{dynamically central sets in partial }
		Let $(S,\cdot)$ be an adequate partial  semigroup. A set $C\subseteq S$ is dynamically central if and only if there exist a dynamical system $\left(X,\langle T_{s}\rangle_{s\in S}\right)$ , points $x$ and $y$ in $X$, and a neighborhood $U$ of $y$ such that
		\begin{itemize}
			\item[(1)] $y$ is a uniformly recurrent point of $X$,
			\item[(2)] $x$ and $y$ are proximal, and 
			\item[(3)] $C=\left\{s\in S:T_{s}\left(x\right)\in U\right\}$.
			
		\end{itemize}
			
	\end{definition}
To prove the Definitions \ref{ algebraic syndetic in partial} and \ref{dynamically central sets in partial } are equivalent, we use the concept of taking limit along ultrafilters.	
	\begin{definition}{\cite[Definition 3.44, Page-74]{HS1}}\label{P-limit}
		Let $D$ be a discrete space, let $p\in \beta D$, let $\langle x_{s}\rangle_{s\in D}$ be an indexed family in a topological space $X$, and let $y\in X$. Then $p\text{-}{\lim_{s\in D}} x_{s}=y$ if and only if for every neighborhood $U$ of $y$, $\left\{s\in D:x_{s}\in U\right\}\in p$. 
	\end{definition}

 The following theorem is essential for us.
	\begin{theorem}\textup{\cite[Theorem 3.48, Page-75]{HS1}}
		Let $D$ be a discrete space, let $p\in \beta D$, and let $\langle x_{s}\rangle_{s\in D}$ be an index family in a  topological space $X$.
		\begin{itemize}
		\item[(a)]If $p\text{-}{\lim_{s\in D}} x_{s}$ exists, then it is unique.
		\item[(b)] If $X$ is a compact space, then $p\text{-}{\lim_{s\in D}} x_{s}$ exists.
		\end{itemize}
	\end{theorem}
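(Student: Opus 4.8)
The plan is to prove the two parts independently, each by a short ultrafilter argument that uses nothing beyond the defining properties of $p$: that $p$ is a proper filter on $D$ (so $\emptyset\notin p$), closed under supersets and finite intersections, and that for every $A\subseteq D$ exactly one of $A,\,D\setminus A$ lies in $p$. Throughout I would write $A_U=\{s\in D:x_s\in U\}$ for an open set $U\subseteq X$, so that by Definition \ref{P-limit} the statement $p\text{-}{\lim_{s\in D}} x_{s}=y$ is exactly the assertion that $A_U\in p$ for every neighborhood $U$ of $y$. I should flag at the outset that part (a) genuinely needs $X$ to be Hausdorff, and I would take that as an (implicit) hypothesis there, as is standard in this setting.

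For part (a) I would argue by contradiction: suppose $y$ and $z$ are both $p$-limits of $\langle x_s\rangle_{s\in D}$ with $y\neq z$. Using the Hausdorff property, pick disjoint neighborhoods $U\ni y$ and $V\ni z$. The definition of the $p$-limit then forces $A_U\in p$ and $A_V\in p$, hence $A_U\cap A_V\in p$ by closure under finite intersection; but $A_U\cap A_V=\{s\in D:x_s\in U\cap V\}=\emptyset$ since $U\cap V=\emptyset$, which is impossible because $\emptyset\notin p$. This yields $y=z$.

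For part (b) I would again go by contradiction, now invoking compactness. If no point of $X$ is a $p$-limit, then to each $y\in X$ I can attach an open neighborhood $U_y$ with $A_{U_y}\notin p$, so that $D\setminus A_{U_y}\in p$. The family $\{U_y:y\in X\}$ covers $X$, and compactness produces a finite subcover $X=\bigcup_{i=1}^n U_{y_i}$. The finite intersection $\bigcap_{i=1}^n\left(D\setminus A_{U_{y_i}}\right)$ then lies in $p$, while it equals $\{s\in D:x_s\notin\bigcup_{i=1}^n U_{y_i}\}=\emptyset$, contradicting $\emptyset\notin p$. Hence some $p$-limit exists.

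I do not expect a genuine obstacle here, since both arguments are direct. The two points to watch are the implicit Hausdorff hypothesis in part (a) — without it, two topologically indistinguishable points would both serve as limits and uniqueness would fail — and, in part (b), the precise way compactness and the finite-intersection closure of $p$ combine: the finite subcover is exactly what converts the individually innocuous sets $D\setminus A_{U_{y_i}}$ into a single empty member of $p$.
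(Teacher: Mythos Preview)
The paper does not supply its own proof of this statement; it is quoted verbatim from \cite[Theorem 3.48]{HS1} and used as a black box. Your argument is correct and is precisely the standard proof given in that reference: separate distinct candidate limits by disjoint neighborhoods to get $\emptyset\in p$ for (a), and for (b) use a finite subcover of the ``bad'' neighborhoods to again force $\emptyset\in p$. Your remark that (a) requires the Hausdorff hypothesis is accurate and worth keeping, since the paper's statement omits it while the cited source includes it.
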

	The following theorem gives us the characterization of proximal points in terms of ultrafilters.
	
	\begin{lemma}
		Let $S$ be an adequate partial semigroup. Let $\left(X,\langle T_{s}\rangle_{s\in S}\right)$ be a dynamical system and let $x,y\in X$. Then $x$ and $y$  are proximal if and only if there is some $p\in \delta S$ such that $T_{p}\left(x\right)=T_{p}\left(y\right)$.
	\end{lemma}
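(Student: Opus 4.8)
The plan is to prove both directions by passing between the dynamical notion of proximality and the algebra of $\delta S$ through the device of $p$-limits. The key object will be a map $T_p$ defined for $p\in\delta S$ by $T_p(z)=p\text{-}\lim_{s\in S}T_s(z)$, which exists because $X$ is compact (by the cited limit theorem) and is well-defined since the limit is unique. First I would record the basic functorial properties of this assignment: for $s\in S$ and $p\in\delta S$ one should have $T_s\circ T_p=T_{s\cdot p}$ where $s\cdot p$ denotes the product in $\delta S$, and more generally $T_q\circ T_p=T_{q\cdot p}$ for $p,q\in\delta S$. These identities are the partial-semigroup analogues of the standard facts in \cite{HS1} and follow from the continuity of each $T_s$, the definition of $T_{s\cdot t}=T_sT_t$ when $t\in\phi(s)$, and the way multiplication in $\delta S$ is computed via iterated limits. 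The adequacy of $S$ and the fact that $\delta S$ is a semigroup are exactly what make these compositions land inside the allowed index set.

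For the forward direction, suppose $x$ and $y$ are proximal. The plan is to build an ultrafilter witnessing $T_p(x)=T_p(y)$. For each neighborhood $U$ of the diagonal in $X\times X$ and each $H\in\mathcal{P}_f(S)$, proximality furnishes some $s\in\sigma(H)$ with $(T_s(x),T_s(y))\in U$. I would collect the sets
\[
E=\bigl\{\{s\in\sigma(H):(T_s(x),T_s(y))\in U\}:H\in\mathcal{P}_f(S),\ U\ \text{a diagonal neighborhood}\bigr\}
\]
and check that this family has the finite intersection property: a finite collection of such conditions can be met simultaneously because finitely many $H$'s can be unioned into a single $H'$ (so $\sigma(H')\subseteq\sigma(H)$ for each), finitely many diagonal neighborhoods can be intersected, and proximality applied to $H'$ and the intersected neighborhood produces a common witness $s$. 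Since every member of $E$ is contained in some $\sigma(H)$, any ultrafilter $p$ extending $E$ automatically lies in $\delta S=\bigcap_H\overline{\sigma(H)}$. Then $T_p(x)=p\text{-}\lim_s T_s(x)$ and $T_p(y)=p\text{-}\lim_s T_s(y)$, and because the diagonal conditions in $E$ force $T_s(x)$ and $T_s(y)$ to be arbitrarily close on $p$-large sets, the uniqueness and Hausdorffness of limits give $T_p(x)=T_p(y)$.

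For the converse, suppose $T_p(x)=T_p(y)$ for some $p\in\delta S$. Given a diagonal neighborhood $U$ and $H\in\mathcal{P}_f(S)$, I would use the definition of the $p$-limit: the set $\{s:T_s(x)\in V\}$ and $\{s:T_s(y)\in V\}$ lie in $p$ for suitable neighborhoods $V$ of the common limit point $T_p(x)=T_p(y)$, chosen so that $V\times V\subseteq U$. Intersecting these two sets with $\sigma(H)$—which lies in $p$ since $p\in\delta S$—yields a $p$-large, hence nonempty, set of indices $s\in\sigma(H)$ with $(T_s(x),T_s(y))\in V\times V\subseteq U$, which is exactly the proximality condition. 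I expect the main obstacle to be the careful verification of the finite intersection property in the forward direction, specifically showing the witnesses can be forced into $\sigma(H)$ for arbitrarily large $H$ while simultaneously meeting a shrinking diagonal neighborhood; this is where adequacy ($\sigma(H)\neq\emptyset$ for all finite $H$) and the partial-semigroup formulation of proximality in Definition \ref{uniform and proximality} must be used in tandem, and it is the step where the general-semigroup argument genuinely needs modification.
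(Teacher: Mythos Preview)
Your proposal is correct and follows essentially the same route as the paper: in the forward direction you extend the family $\{B_U\cap\sigma(H)\}$ to an ultrafilter $p\in\delta S$ via the finite intersection property, and in the converse you intersect $\{s:T_s(x)\in V\}$, $\{s:T_s(y)\in V\}$, and $\sigma(H)$ inside $p$ exactly as the paper does. The only cosmetic differences are that the paper replaces your closing appeal to Hausdorffness by an explicit diagonal neighborhood $U=(V\times V)\cup(V_2\times X)$ built from a regular separation, and that the functorial identity $T_q\circ T_p=T_{qp}$ you record at the outset is not actually used in this lemma (it is proved separately later in the paper and only needed for subsequent results).
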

	
	\begin{proof}
		Let $x$ and $y$ are proximal. Let $\mathcal{N}$ be the set of all neighborhoods of the diagonal in $X\times X$. For each $U\in \mathcal{N}$, let $B_{U}=\left\{s\in S:\left(T_{s}\left(x\right),T_{s}\left(y\right)\right)\in U\right\}$. From the Definition \ref{uniform and proximality} of proximality, $$\mathcal{F}=\left\{B_{U}:U\in \mathcal{N}\right\}\cup \left\{\sigma\left(H\right):H\in \mathcal{P}_{f}\left(S\right)\right\}$$ has finite intersection property. Now choose $p\in \beta S$ such that $\mathcal{F}\subseteq p$. As $\left\{\sigma\left(H\right):H\in \mathcal{P}_{f}\left(S\right)\right\}\subseteq p$, we may consider $p\in \delta S$. Let $z=T_{p}\left(x\right)$. To see that $z=T_{p}\left(y\right)$, let $V$ be an open neighbourhood of $z$ in $X$. Note that $X\setminus V$ is a closed subset of $X$ such that $z\notin X\setminus V$. Since $X$ is a compact Hausdorff, there exists disjoint open sets $V_{1},V_{2}$ such that $z\in V_{1}$ and $X\setminus V\subseteq V_{2}$. Let $U=\left(V\times V\right)\cup \left(V_{2}\times X\right)$. Then $U$ is a neighborhood of the diagonal in $X\times X$ such that $\pi_{2}\left(\pi_{1}^{-1}\left(V_{1}\right)\cap U\right)\subseteq V$, where $pi_{1}$ and $pi_{2}$ denote the first and second projection of $X\times X$ onto $X$ respectively. Let $$E=\left\{s\in S:T_{s}\left(x\right)\in V_{1}\right\}$$ and let $$F=\left\{s\in S:\left(T_{s}\left(x\right),T_{s}\left(y\right)\right)\in U\right\}.$$ Then $E,F\in p$ and $$E\cap F\subseteq \left\{s\in S:T_{s}\left(x\right)\in V\right\}.$$ Thus $\left\{s\in S:T_{s}\left(x\right)\in V\right\}\in p$ for every open neighborhood $V$ of $z$.
		
	\textbf{Conversely} suppose we have $p\in \delta S$ such that $T_{p}\left(x\right)=T_{p}\left(y\right)=z$. Let $U$ be a neighborhood  of the diagonal in $X\times X$. Choose an open neighborhood $V$ of $z$ in $X$ such that $V\times V\subseteq U$. Let $$B=\left\{s\in S:T_{s}\left(x\right)\in V\right\}\text{ and }C= \left\{s\in S:T_{s}\left(y\right)\in V\right\}.$$ Then $B\cap C\in p$. For each $H\in\mathcal{F}_{f}\left(S\right)$, choose $s\in \sigma\left(H\right)\cap B\cap C$ such that $\left(T_{s}\left(x\right),T_{s}\left(y\right)\right)\in V\times V\subseteq U$.
	
	\end{proof}
	
	We need the following structure theorem from \cite{HFM}.
	
	\begin{theorem}\label{Operation}\textup{\cite[Theorem 2.2]{HFM}}
		Let $\left(S.\cdot\right)$ be an adequate partial semigroup. Let $$D=\left(\bigcap_{x\in S}\left(\left\{x\right\}\times \overline{\phi\left(x\right)}\right)\cup \left(\beta S\times \delta S\right)\right).$$ Then the operation $\cdot$ can be extended uniquely to $D$ so that
		\begin{itemize}
			\item[(a)] for each $x\in S$, the function $\lambda_{x}:\overline{\phi\left(x\right)}\rightarrow\beta S$, defined by $\lambda_{x}\left(q\right)=x\cdot q$, is continuous, and
			\item[(b)] for each $q\in \delta S$, the function $\rho_{q}:\beta S\rightarrow \beta S$, defined by $\rho_{q}\left(q\right)=p\cdot q$ is continuous.
		\end{itemize}
		
	\end{theorem}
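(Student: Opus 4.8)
The plan is to synthesize the operation on $D$ from two families of continuous extensions furnished by the universal property of the Stone-\v{C}ech compactification: every map from a discrete space into a compact Hausdorff space extends uniquely to a continuous map on the compactification. Here $D$ consists of all pairs $(x,q)$ with $x\in S$ and $q\in\overline{\phi(x)}$, together with all pairs $(p,q)\in\beta S\times\delta S$, and I would build the operation on these two families separately before reconciling them on their overlap.

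First I would fix $x\in S$. Because $x\cdot s$ is defined for every $s\in\phi(x)$, the assignment $s\mapsto\overline{x\cdot s}$ is a genuine function from the discrete space $\phi(x)$ into the compact Hausdorff space $\beta S$. Since $\overline{\phi(x)}$ is canonically homeomorphic to $\beta\left(\phi(x)\right)$, this function extends uniquely to a continuous map $\lambda_{x}:\overline{\phi(x)}\rightarrow\beta S$, and I would \emph{define} $x\cdot q:=\lambda_{x}(q)$ on $\overline{\phi(x)}$. Clause (a) is then immediate, and the standard clopen computation $\lambda_{x}^{-1}\left(\overline{A}\right)\cap\phi(x)=x^{-1}A$ recovers the ultrafilter description $A\in x\cdot q\iff x^{-1}A\in q$ of \cite[Lemma 2.4(a)]{HM}.

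Next I would fix $q\in\delta S$. The crucial point is that $\delta S=\bigcap_{x\in S}\overline{\phi(x)}$, so $q\in\overline{\phi(x)}$ for \emph{every} $x\in S$; hence $x\mapsto\lambda_{x}(q)=x\cdot q$ is a well-defined function $S\rightarrow\beta S$, and the universal property extends it uniquely to a continuous $\rho_{q}:\beta S\rightarrow\beta S$. Setting $p\cdot q:=\rho_{q}(p)$ gives clause (b) by construction, and the same clopen argument yields $A\in p\cdot q\iff\left\{x\in S:x^{-1}A\in q\right\}\in p$, matching \cite[Lemma 2.4(b)]{HM}. On the overlap $\left\{(x,q):x\in S,\ q\in\delta S\right\}$ the two recipes agree, since both return $\lambda_{x}(q)$; and uniqueness is forced because any extension of the original partial operation obeying (a) must coincide with $\lambda_{x}$ on the dense set $\phi(x)$, while any such extension obeying (b) must coincide with $\rho_{q}$ on the dense set $S$.

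The step I expect to be the crux is the passage to right multiplication: the second slot must be cut down to $\delta S$ precisely so that $x\cdot q$ is defined for \emph{all} $x\in S$, which is exactly what permits $\rho_{q}$ to be extended over the whole of $\beta S$. Adequacy is what keeps this non-vacuous, since it forces each $\overline{\sigma(H)}$ to be nonempty and the family $\left\{\overline{\sigma(H)}:H\in\mathcal{P}_{f}(S)\right\}$ to have the finite intersection property, whence $\delta S=\bigcap_{H}\overline{\sigma(H)}\neq\emptyset$ by compactness.
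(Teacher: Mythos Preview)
The paper does not supply its own proof of this theorem: it is quoted verbatim from \cite[Theorem 2.2]{HFM} and used as a black box. So there is nothing in the paper to compare your argument against.

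That said, your sketch is correct and is exactly the standard two-stage extension that \cite{HFM} (and, in the total-semigroup case, \cite[Theorem~4.1]{HS1}) carry out: first extend each left translation $s\mapsto x\cdot s$ continuously from $\phi(x)$ to $\overline{\phi(x)}\cong\beta(\phi(x))$, then use the fact that $q\in\delta S$ lies in every $\overline{\phi(x)}$ to make $x\mapsto x\cdot q$ globally defined on $S$ and extend it to $\beta S$. Your identification of the role of $\delta S$ and of adequacy is accurate, and your uniqueness argument by density is the right one. One cosmetic point: the displayed $D$ in the paper has a typographical slip (the $\bigcap$ over $x\in S$ should be $\bigcup$); you have silently read it the intended way, which is fine.
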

	
The following theorem is a version of van der corput lemma, a very useful result in ergodic Ramsey theory.
	
\begin{theorem}\label{vd}\textup{\cite[Theorem 4.5 Page-87]{HS1}}
	Let $\left(S,\cdot\right)$ be a semigroup, $X$ be a topological space, $\langle x_{s}\rangle_{s\in S}$  be an index family in $X$, and $p,q\in \beta S$. If all limits involved exists, then $$\left(pq\right)\text{-}\lim_{v\in S}x_{v}=p\text{-}\lim_{s\in S}q\text{-}\lim_{t\in S}x_{st}.$$
\end{theorem}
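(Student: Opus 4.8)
The plan is to prove the identity by taking the right-hand side as a candidate value and verifying that it also serves as the left-hand limit, reading off equality from the uniqueness of $p$-limits. Concretely, for each $s\in S$ I would first set $z_{s}=q\text{-}\lim_{t\in S}x_{st}$, which exists by hypothesis, and then put $y=p\text{-}\lim_{s\in S}z_{s}$, again guaranteed by hypothesis. The goal then reduces to showing $(pq)\text{-}\lim_{v\in S}x_{v}=y$; since all three limits are assumed to exist and (in the Hausdorff setting) are unique, establishing that $y$ satisfies the defining property of the iterated-product limit finishes the proof.

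The engine of the argument is the definition of the product ultrafilter recalled earlier: for $A\subseteq S$ one has $A\in pq$ if and only if $\{s\in S:s^{-1}A\in q\}\in p$, where $s^{-1}A=\{t\in S:s\cdot t\in A\}$. I would apply this with $A=\{v\in S:x_{v}\in U\}$ for an arbitrary neighborhood $U$ of $y$. The key bookkeeping step is the set identity $s^{-1}\{v:x_{v}\in U\}=\{t:x_{st}\in U\}$, which is immediate from the definition of $s^{-1}A$. Thus $\{v:x_{v}\in U\}\in pq$ is equivalent to $\{s:\{t:x_{st}\in U\}\in q\}\in p$, and it is exactly this membership that I need to verify.

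To verify it, I would use that whenever $U$ is open and $z_{s}\in U$, the set $U$ is a neighborhood of $z_{s}=q\text{-}\lim_{t}x_{st}$, so by the definition of the $q$-limit $\{t:x_{st}\in U\}\in q$. This yields the inclusion $\{s:z_{s}\in U\}\subseteq\{s:\{t:x_{st}\in U\}\in q\}$. Since $y=p\text{-}\lim_{s}z_{s}$ and $U$ is a neighborhood of $y$, the left-hand set lies in $p$; as $p$ is a filter, the larger right-hand set lies in $p$ as well. Hence $\{v:x_{v}\in U\}\in pq$ for every neighborhood $U$ of $y$, which is precisely the statement that $(pq)\text{-}\lim_{v}x_{v}=y$.

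The step I expect to be most delicate is the direction of this final inclusion. The tempting converse --- that $\{t:x_{st}\in U\}\in q$ forces $z_{s}\in U$ --- is false in general, since an ultrafilter limit of points sitting in $U$ need only land in $\overline{U}$; using that weaker fact would prove only $(pq)\text{-}\lim_{v}x_{v}\in\overline{U}$ and would require a separation axiom to conclude. The point is to run the implication in the direction $z_{s}\in U\Rightarrow\{t:x_{st}\in U\}\in q$, which needs only that $U$ be open and makes the filter-superset argument go through without any regularity assumption beyond the Hausdorffness already implicit in treating the limits as single points.
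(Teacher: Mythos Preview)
Your argument is correct. The inclusion $\{s:z_{s}\in U\}\subseteq\{s:\{t:x_{st}\in U\}\in q\}$ for open $U$ is exactly the right direction, and the filter-superset step then gives $\{v:x_{v}\in U\}\in pq$; your discussion of why the reverse inclusion fails and is not needed is also on target. One small refinement: an arbitrary neighborhood $U$ of $y$ need not itself be open, so strictly speaking you should first pass to an open $V$ with $y\in V\subseteq U$, run your argument with $V$, and then use $\{v:x_{v}\in V\}\subseteq\{v:x_{v}\in U\}$ at the end---but this is cosmetic.

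As for comparison with the paper: the paper does \emph{not} supply its own proof of this theorem; it is simply quoted from \cite[Theorem~4.5]{HS1}. The paper does, however, prove the partial-semigroup analogue immediately afterward, and that proof uses a genuinely different mechanism: it sets $f(s)=x_{s}$, invokes the continuous extension $\widetilde{f}:\beta S\to X$, and reads off the identity from $r\text{-}\lim_{v}x_{v}=\widetilde{f}(r)$ together with the preceding lemma $pq=p\text{-}\lim_{s}q\text{-}\lim_{t\in\phi(s)}st$. That route is slick but requires $X$ to be compact Hausdorff so that $\widetilde{f}$ exists---an extra hypothesis present in the partial-semigroup version but \emph{not} in the semigroup theorem you are proving. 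Your direct verification from the definition of the product ultrafilter works at the stated generality (an arbitrary topological space, assuming only that the relevant limits exist), so it is the appropriate argument here.
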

	It is obvious from  the Theorem\ref{Operation} (a), that for $q\in \delta S$, $q\text{-}\lim_{t\in \phi\left(s\right)}{st}=sq$, together with   Theorem\ref{Operation} (b) imply the following:
	\begin{lemma}
		Let $\left(S.\cdot\right)$ be an adequate partial semigroup.If $q\in \delta S$ and $p\in \beta S$, then $$\left(pq\right)=p\text{-}\lim_{s\in S}q\text{-}\lim_{t\in \phi\left(s\right)}{st}.$$
			
	\end{lemma}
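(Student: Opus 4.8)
The plan is to prove the identity $pq = p\text{-}\lim_{s\in S} q\text{-}\lim_{t\in \phi(s)} st$ by combining the two continuity statements from Theorem \ref{Operation} in the manner suggested by the remark immediately preceding the lemma. First I would fix $q\in \delta S$ and establish the \emph{inner} limit: for each $s\in S$, I claim that $q\text{-}\lim_{t\in \phi(s)} st = sq$. This is exactly a restatement of the continuity of $\lambda_s$ on $\overline{\phi(s)}$ from Theorem \ref{Operation}(a). Indeed, the net $\langle st\rangle_{t\in \phi(s)}$ is the image under $\lambda_s$ of the points $t\in \phi(s)\subseteq \overline{\phi(s)}$, and since $q\in \delta S\subseteq \overline{\phi(s)}$, the $q$-limit of this net equals $\lambda_s(q) = sq$; here I would invoke the standard fact (\cite[Lemma 3.30]{HS1} or its analogue) that a function $f$ is continuous at $p$ iff $p\text{-}\lim_{t} f(t) = f(p\text{-}\lim_t t)$, applied to $\lambda_s$ with the $q$-limit of the identity family being $q$ itself.

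Having identified the inner limit as $sq$, the outer limit becomes $p\text{-}\lim_{s\in S} sq$. The second step is then to recognize this as $pq$ using the continuity of $\rho_q$ from Theorem \ref{Operation}(b). Since $\rho_q(s) = sq$ for $s\in S$ and $\rho_q$ is continuous on all of $\beta S$, the same continuity-limit principle gives $p\text{-}\lim_{s\in S} sq = p\text{-}\lim_{s\in S}\rho_q(s) = \rho_q(p) = pq$, where again the $p$-limit of the identity family $\langle s\rangle_{s\in S}$ over $\beta S$ is $p$. Chaining the two computations yields $p\text{-}\lim_{s\in S} q\text{-}\lim_{t\in \phi(s)} st = p\text{-}\lim_{s\in S} sq = pq$, which is the desired identity.

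I should note that one can view this lemma as the partial-semigroup analogue of the van der Corput-type iterated-limit formula in Theorem \ref{vd}, and an alternative route would be to mimic that proof directly. However, the direct two-step approach via Theorem \ref{Operation} is cleaner here because the operation on $D$ is precisely defined so that these two continuity properties hold, so there is no need to re-derive the iterated limit from scratch. The main subtlety to handle carefully is the domain bookkeeping: the inner limit is taken over $t\in \phi(s)$ rather than over all of $S$, so I must ensure that $q\in \overline{\phi(s)}$ for every $s$ (which holds since $q\in \delta S = \bigcap_{x\in S}\overline{\phi(x)}$), guaranteeing that $q\text{-}\lim_{t\in \phi(s)} st$ is a legitimate $q$-limit of a net indexed by a set belonging to $q$.

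\begin{proof}
Fix $q\in \delta S$ and $p\in \beta S$. For each $s\in S$, since $q\in \delta S\subseteq \overline{\phi(s)}$, the function $\lambda_s$ is continuous at $q$ by Theorem \ref{Operation}(a), and hence
$$q\text{-}\lim_{t\in \phi(s)} st = q\text{-}\lim_{t\in \phi(s)} \lambda_s(t) = \lambda_s(q) = sq.$$
Therefore the outer limit satisfies
$$p\text{-}\lim_{s\in S} q\text{-}\lim_{t\in \phi(s)} st = p\text{-}\lim_{s\in S} sq = p\text{-}\lim_{s\in S} \rho_q(s).$$
Since $\rho_q$ is continuous on $\beta S$ by Theorem \ref{Operation}(b), it follows that
$$p\text{-}\lim_{s\in S} \rho_q(s) = \rho_q(p) = pq.$$
Combining these equalities gives $p\text{-}\lim_{s\in S} q\text{-}\lim_{t\in \phi(s)} st = pq$, as claimed.
\end{proof}
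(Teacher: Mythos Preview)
Your proof is correct and follows exactly the approach the paper indicates in the remark immediately preceding the lemma: use Theorem~\ref{Operation}(a) to identify the inner limit as $sq$, then use Theorem~\ref{Operation}(b) to identify the outer limit as $pq$. The paper does not give a separate proof beyond that remark, so your argument is a fleshed-out version of what the authors intended.
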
	
The following theorem is a version of Theorem \ref{vd} for adequate partial semigroups.
\begin{theorem}
	Let $\left(S,\cdot\right)$ be an adequate partial semigroup, let $X$ be a compact Housdroff space and $q\in \delta S$ and $p\in \beta S$. Let $\langle x_{n}\rangle_{s\in S}$  be an index family in $X$, and $p,q\beta S$. Then $$\left(pq\right)\text{-}\lim_{v\in S}x_{v}=p\text{-}\lim_{s\in S}q\text{-}\lim_{t\in \phi\left(s\right)}x_{st}.$$	
\end{theorem}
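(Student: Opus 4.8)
The plan is to transcribe the proof of the classical van der Corput lemma (Theorem \ref{vd}), unwinding the product $pq$ in $\delta S$ through the characterization in \cite[Lemma 2.4]{HM}(b), while keeping track of the fact that every inner limit must run over the admissible index set $\phi(s)$. First I would verify that all the limits in the statement make sense. Since $X$ is compact Hausdorff, the $p$-, $q$-, and $(pq)$-limits of any family in $X$ exist and are unique by \cite[Theorem 3.48]{HS1}. Moreover, because $q\in\delta S=\bigcap_{x\in S}\overline{\phi(x)}$, we have $\phi(s)\in q$ for every $s\in S$, so the inner expression $q\text{-}\lim_{t\in\phi(s)}x_{st}$ is a legitimate $q$-limit of the subfamily indexed by the member $\phi(s)\in q$. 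I would set $y_{s}=q\text{-}\lim_{t\in\phi(s)}x_{st}$ and $z=p\text{-}\lim_{s\in S}y_{s}$, so that the goal becomes showing $(pq)\text{-}\lim_{v\in S}x_{v}=z$.

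Next I would fix an open neighborhood $U$ of $z$ and reduce the claim $\{v\in S:x_{v}\in U\}\in pq$ to a statement about $p$. By \cite[Lemma 2.4]{HM}(b), this membership is equivalent to $\{s\in S:s^{-1}\{v:x_{v}\in U\}\in q\}\in p$, and since $s^{-1}A=\{t\in\phi(s):s\cdot t\in A\}$ in the partial-semigroup setting, one computes $s^{-1}\{v:x_{v}\in U\}=\{t\in\phi(s):x_{st}\in U\}$. Thus it suffices to prove $\{s\in S:\{t\in\phi(s):x_{st}\in U\}\in q\}\in p$.

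The crux is a single set inclusion. If $y_{s}\in U$, then $U$ is an open neighborhood of $y_{s}=q\text{-}\lim_{t\in\phi(s)}x_{st}$, so by Definition \ref{P-limit} the set $\{t\in\phi(s):x_{st}\in U\}$ lies in $q$. This gives $\{s:y_{s}\in U\}\subseteq\{s:\{t\in\phi(s):x_{st}\in U\}\in q\}$. Since $z=p\text{-}\lim_{s}y_{s}$ and $U$ is a neighborhood of $z$, the left-hand set belongs to $p$, and upward closure of the ultrafilter $p$ then forces the right-hand set into $p$, which is exactly what was needed. As $U$ ranges over all neighborhoods of $z$, Definition \ref{P-limit} yields $(pq)\text{-}\lim_{v}x_{v}=z$, completing the argument.

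I do not expect a genuine obstacle here, as the reasoning is a direct analogue of the classical proof; the only points demanding care are bookkeeping ones, namely confirming $\phi(s)\in q$ so that the inner $q$-limit is defined, and using the partial-semigroup product formula (with $x^{-1}A$ restricted to $\phi(x)$) in place of the total one. The hypothesis $q\in\delta S$ is precisely what makes both of these work, so no idea beyond the classical lemma should be required.
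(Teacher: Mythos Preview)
Your argument is correct, but it follows a genuinely different route from the paper's proof. The paper defines $f:S\to X$ by $f(s)=x_{s}$, takes its continuous extension $\widetilde{f}:\beta S\to X$, and then pushes the preceding lemma $pq=p\text{-}\lim_{s\in S}\,q\text{-}\lim_{t\in\phi(s)}st$ through $\widetilde{f}$, using that $\widetilde{f}$ commutes with ultrafilter limits. In contrast, you work directly from Definition~\ref{P-limit} and the membership criterion $A\in pq\iff\{s:s^{-1}A\in q\}\in p$ from \cite[Lemma~2.4]{HM}, proving the required inclusion by a single upward-closure step. Your approach is more elementary in that it bypasses both the Stone--\v{C}ech extension $\widetilde{f}$ and the auxiliary iterated-limit identity for $pq$ in $\beta S$; the paper's approach is slicker in that, once those two ingredients are in place, the computation is a one-line chain of equalities. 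Either way, the hypothesis $q\in\delta S$ enters at exactly the same point (ensuring $\phi(s)\in q$ so the inner limit and the set $s^{-1}A$ make sense), and neither argument needs anything beyond that.
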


\begin{proof}
	Let $f:\rightarrow X$ be defined by $f\left(s\right)=x_{s}$ and let $\widetilde{f}$ be the continuous extension of $f$ to $\beta S$. Then one has  
		$$
	\begin{aligned}
		\left(pq\right)\text{-}\lim_{v\in S}x_{v} & =  \left(pq\right)\text{-}\lim_{v\in S} \widetilde{f}\left(v\right)\\
		&=  \widetilde{f}\left(\left(pq\right)\text{-}\lim_{v\in S}x_{v}\right)\\
		&= \widetilde{f}\left(pq\right)\\
		&= \widetilde{f}\left(p\text{-}\lim_{s\in S}q\text{-}\lim_{t\in \phi\left(s\right)}{st}\right)\\
		&= p\text{-}\lim_{s\in S}q\text{-}\lim_{t\in \phi\left(s\right)}\widetilde{f}\left(st\right)\\
		&= p\text{-}\lim_{s\in S}q\text{-}\lim_{t\in \phi\left(s\right)}x_{st}.
	\end{aligned}$$
\end{proof}

We need the following corollary of the above theorem in our proof.
\begin{corollary}\label{action of delta s}
Let $S$ be an adequate partial semigroup. Let $\left(X,\langle T_{s}\rangle_{s\in S}\right)$ be a dynamical system. Let $p\in \beta S$ and $q\in \delta S$, then for any $x\in X$, $T_{pq}\left(x\right)=T_{p}\left(T_{q}\left(x\right)\right)$.
\end{corollary}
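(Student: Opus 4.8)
The plan is to reduce the corollary to the partial-semigroup van der Corput theorem proved immediately above, applied to the orbit family $\langle x_s\rangle_{s\in S}$ given by $x_s = T_s(x)$ for a fixed but arbitrary $x\in X$. Since $X$ is compact Hausdorff and each $T_s$ is a map into $X$, this is a legitimate indexed family in a compact space, so all the ultrafilter limits below are guaranteed to exist by Theorem 3.48(b). The operators $T_p$ for $p\in\beta S$ are to be understood as the ultrafilter limits $T_p(x) = p\text{-}\lim_{s\in S} T_s(x)$, which is the standard way the action of $S$ extends to $\beta S$, so unwinding this definition is where the argument begins.

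First I would write out both sides using the definition of $T_{(\cdot)}$ as a $p$-limit. The left-hand side is $T_{pq}(x) = (pq)\text{-}\lim_{v\in S} T_v(x) = (pq)\text{-}\lim_{v\in S} x_v$, which is exactly the quantity the partial van der Corput theorem evaluates. Applying that theorem with the family $\langle x_v\rangle_{v\in S}$ gives
\[
T_{pq}(x) = (pq)\text{-}\lim_{v\in S} x_v = p\text{-}\lim_{s\in S}\, q\text{-}\lim_{t\in\phi(s)} x_{st} = p\text{-}\lim_{s\in S}\, q\text{-}\lim_{t\in\phi(s)} T_{st}(x).
\]
The next step is to collapse the inner limit. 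For each fixed $s\in S$ and each $t\in\phi(s)$, condition (iii) of Definition \ref{dynamical system for partial } says $T_{st} = T_s T_t$ precisely because $t\in\phi(s)$; hence $T_{st}(x) = T_s(T_t(x))$. Since $T_s$ is continuous and the $q$-limit of the family $\langle T_t(x)\rangle_{t\in\phi(s)}$ is by definition $T_q(x)$ (note $q\in\delta S\subseteq\overline{\phi(s)}$, so this inner limit is taken over an index set in $p$ and equals $T_q(x)$), continuity lets me pull $T_s$ through the limit: $q\text{-}\lim_{t\in\phi(s)} T_s(T_t(x)) = T_s\big(q\text{-}\lim_{t\in\phi(s)} T_t(x)\big) = T_s(T_q(x))$.

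Substituting this back gives $T_{pq}(x) = p\text{-}\lim_{s\in S} T_s(T_q(x))$, and the right-hand side is by definition $T_p(T_q(x)) = T_p(T_q(x))$, completing the identity $T_{pq}(x) = T_p(T_q(x))$.

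The main obstacle I anticipate is justifying the interchange of $T_s$ with the inner $q$-limit and, more delicately, that the indexing issues line up: the van der Corput theorem produces an inner limit over $t\in\phi(s)$, and I must ensure that $q\in\delta S$ guarantees $\phi(s)\in q$ for every $s$ (which holds since $\delta S = \bigcap_{x\in S}\overline{\phi(x)}$), so that $q\text{-}\lim_{t\in\phi(s)}$ is well-defined and agrees with $q\text{-}\lim_{t\in S}$ on the orbit. The continuity pull-through is routine given that each $T_s$ is continuous into the Hausdorff space $X$ (so limits are unique and preserved), but it is the one place where the partial-semigroup hypotheses $t\in\phi(s)$ and $q\in\delta S$ must be invoked carefully rather than the full-semigroup identities; everything else is a direct quotation of the preceding theorem.
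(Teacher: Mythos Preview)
Your proposal is correct and follows essentially the same approach as the paper: set $x_s=T_s(x)$, invoke the partial-semigroup van der Corput theorem, use $T_{st}=T_sT_t$ for $t\in\phi(s)$, pull $T_s$ through the inner $q$-limit by continuity, and identify the resulting limits as $T_q(x)$ and then $T_p(T_q(x))$. The paper presents exactly this chain of equalities without the additional justification you supply for the $\phi(s)\in q$ and continuity steps, so your write-up is, if anything, slightly more detailed.
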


\begin{proof}
	Taking $f\left(s\right)=T_{s}\left(x\right)$, 
	$$
	\begin{aligned}
		T_{pq}\left(x\right) &= \left(pq\right)\text{-}\lim_{v\in S}T_{v}\left(x\right)\\
		&= p\text{-}\lim_{s\in S}q\text{-}\lim_{t\in \phi\left(s\right)}T_{st}\left(x\right)\\
		&= p\text{-}\lim_{s\in S}q\text{-}\lim_{t\in \phi\left(s\right)}T_{s}T_{t}\left(x\right)\\
		&= p\text{-}\lim_{s\in S}T_{s}\left(q\text{-}\lim_{t\in \phi\left(s\right)}T_{t}\left(x\right)\right)\\
		&= p\text{-}\lim_{s\in S}T_{s}\left(T_{q}\left(x\right)\right)\\
		&= T_{p}\left(T_{q}\left(x\right)\right).
	\end{aligned} $$

\end{proof}
The following lemma connects uniformly recurrent points and minimal left ideals.	
	\begin{lemma}
		Let $S$ be an adequate partial semigroup. Let $\left(X,\langle T_{s}\rangle_{s\in S}\right)$ be a dynamical system and $L$ be a minimal left ideal of $\delta S$ and $x\in X$. The following statements are equivalent:
		\begin{itemize}
			\item[(a)] The point $x$ is a uniformly recurrent point of $\left(X,\langle T_{s}\rangle_{s\in S}\right)$.
			\item[(b)] There exists $p\in L$ such that $T_{p}\left(x\right)=x$.	
			\item[(c)] There exists an idempotent $p\in L$ such that $T_{p}\left(x\right)=x$.
		\end{itemize}
	\end{lemma}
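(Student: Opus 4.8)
The plan is to establish the cycle of implications $(a)\Rightarrow(b)\Rightarrow(c)\Rightarrow(a)$. The workhorse throughout is the map $\Phi\colon\beta S\to X$ given by $\Phi(q)=T_q(x)=q\text{-}\lim_{s\in S}T_s(x)$, which is the continuous extension to $\beta S$ of the assignment $s\mapsto T_s(x)$ and is therefore continuous; hence for any closed subset of $\beta S$ its $\Phi$-image is closed in $X$. I will also lean heavily on the composition rule $T_{pq}(x)=T_p(T_q(x))$, valid whenever $p\in\beta S$ and $q\in\delta S$ (Corollary~\ref{action of delta s}), and on the fact that $\delta S$ and every minimal left ideal $L$ of $\delta S$ are compact, being closed subsets of $\beta S$. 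Recall also that ``syndetic'' means, by Definition~\ref{ algebraic syndetic in partial}, meeting every left ideal of $\delta S$ in the closure.

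For $(a)\Rightarrow(b)$, I would fix an open neighbourhood $U$ of $x$ and set $A_U=\{s\in S:T_s(x)\in U\}$. Uniform recurrence makes $A_U$ syndetic, so $\overline{A_U}\cap L\neq\emptyset$; any $q_U$ in this intersection satisfies $\Phi(q_U)\in\overline{U}$, so $\Phi(L)$ meets $\overline{U}$ for every neighbourhood $U$ of $x$. Since $X$ is compact Hausdorff, hence regular, and $\Phi(L)$ is closed, a standard shrinking argument (choosing $U$ with $\overline{U}$ inside a prescribed neighbourhood of $x$) forces $x\in\Phi(L)$, i.e.\ $T_p(x)=x$ for some $p\in L$. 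For $(b)\Rightarrow(c)$, I would consider $I=\{q\in\delta S:T_q(x)=x\}=\Phi^{-1}(\{x\})\cap\delta S$, which is closed; using Corollary~\ref{action of delta s} with the right factor in $\delta S$ one checks that $I$ is a subsemigroup. Then $I\cap L$ is nonempty (it contains the $p$ from $(b)$), closed, compact, a subsemigroup of $\delta S$, and right topological, so by Ellis's theorem it contains an idempotent, which is the desired idempotent of $L$ fixing $x$.

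For $(c)\Rightarrow(a)$, let $p$ be the idempotent from $(c)$, fix an open neighbourhood $U$ of $x$, and put $A=\{s\in S:T_s(x)\in U\}$; since $T_p(x)=x\in U$ we have $A\in p$. Given an arbitrary minimal left ideal $L''$ of $\delta S$, the set $L''p$ is a left ideal contained in $L$ (as $p\in L$), so minimality of $L$ gives $L''p=L$; in particular $p=qp$ for some $q\in L''$, whence $T_q(x)=T_q(T_p(x))=T_{qp}(x)=T_p(x)=x$ by Corollary~\ref{action of delta s}, and therefore $A\in q\in L''$. Thus $\overline{A}$ meets every minimal left ideal, and since every left ideal of $\delta S$ contains a minimal one, $\overline{A}$ meets every left ideal; hence $A$ is syndetic and $x$ is uniformly recurrent. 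The main delicacy I anticipate is bookkeeping about which side the $\delta S$-factor occupies: the identity $T_{pq}=T_pT_q$ holds only when the \emph{right} factor lies in $\delta S$, which is exactly why the products must be arranged as $L''p$ and $qp$ above and why $I$ must be taken inside $\delta S$; getting any of these sides wrong breaks the composition rule. The remaining points—closedness of minimal left ideals and of $\Phi(L)$, and the verification that $I\cap L$ satisfies the hypotheses of Ellis's theorem—are routine.
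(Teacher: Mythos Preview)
Your argument is correct, and the overall architecture (the cycle $(a)\Rightarrow(b)\Rightarrow(c)\Rightarrow(a)$, the use of Corollary~\ref{action of delta s}, and Ellis's theorem for $(b)\Rightarrow(c)$) matches the paper. Where you diverge is in the execution of the two outer implications. For $(a)\Rightarrow(b)$ the paper does not use the continuous map $\Phi$ at all: it fixes some $q\in L$, invokes the combinatorial $\check{a}$-syndetic characterisation to produce, for each neighbourhood $U$ and each $G\in\mathcal{P}_f(S)$, an element $t_{U,G}$ with $t_{U,G}^{-1}B_U\in q$, builds an ultrafilter $r\in\delta S$ containing all the sets $C_U=\{t_{V,G}:V\subseteq U\}$, and then checks that $p=rq\in L$ satisfies $B_U\in p$ for every $U$. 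Your route---$\Phi(L)$ is closed, it meets $\overline{U}$ for each $U$ by syndeticity, hence by regularity it contains $x$---is shorter and purely topological; it sidesteps the $\check{a}$-syndetic machinery but relies on the (easy) fact that minimal left ideals of $\delta S$ are closed. For $(c)\Rightarrow(a)$ the paper argues by contradiction: assuming $B$ is not syndetic it manufactures, again via the $\check{a}$-syndetic description, an $r\in\delta S$ with $\delta Sr\cap\overline{B}=\emptyset$, then uses $\delta Srp=L$ to extract $q\in\delta Sr$ with $qp=p$ and derive $B\in q$. Your direct argument---for any minimal left ideal $L''$ one has $L''p=L$, hence some $q\in L''$ with $qp=p$ and $A\in q$---is the same idea stripped of the contradiction wrapper and of the combinatorial detour. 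In short, the paper's proof showcases the $\check{a}$-syndetic equivalence proved just before the lemma, whereas yours works entirely with the algebraic Definition~\ref{ algebraic syndetic in partial} and the continuity of $\Phi$; both are valid, and yours is the more streamlined of the two.
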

	
	\begin{proof}
		(a)$\implies$(b) Choose any $q\in L$. Let $\mathcal{N}$ be the set of neighborhoods of $x$ in $X$. For each $U\in \mathcal{N}$, let $$B_{U}=\left\{s\in S:T_{s}\left(x\right)\in U\right\}.$$ Sine $x$ is a  uniformly recurrent point, each  $B_{U}$ is syndetic, so for  For each $G\in \mathcal{P}_{f}\left(S\right)$, pick  $H_{U,G}\in\mathcal{P}_{f}\left(\sigma\left(G\right)\right)$ such that  $\sigma\left(H\cup K\right)\subseteq \bigcup_{t\in H_{U,G}}{t^{-1}A}$ for some $K\in \mathcal{P}_{f}\left(S\right)$. So for each $U\in \mathcal{N}$ and each $G\in \mathcal{P}_{f}\left(S\right)$ pick $t_{U,G}\in H_{U,G}$, such that $t_{U,G}^{-1}B_{U}\in q$. Given $U\in \mathcal{N}$, let $$C_{U}=\left\{t_{V,G}:V\subseteq U \text{ and }G\in \mathcal{P}_{f}\left(S\right)\right\}.$$ Then $$\left\{C_{U}:U\in\mathcal{N}\right\}\cup\left\{G:G\in\mathcal{P}_{f}\left(S\right)\right\}$$ has the finite intersection property. so, pick $r\in \delta S$ such that $\left\{C_{U}:U\in\mathcal{N}\right\}\subseteq r$ and let $p=rq$. Since $L$ is a left ideal of $\delta S$, $p\in L$.  To prove $T_{p}\left(x\right)=x$, it is sufficient to show that $C_{U}\subseteq \left\{t\in S:t^{-1}B_{U}\in q\right\}$, where $U\in \mathcal{N}$ with $B_{U}\in p$. So, let $t\in C_{U}$ and pick $V\in\mathcal{N}$ and $G\in \mathcal{P}_{f}\left(S\right)$ such that $V\subseteq U$ and $t=t_{V,G}$. Then $t^{-1}B_{V}\in q$ and $t^{-1}B_{V}\subseteq t^{-1}B_{U}$.
		
		(b)$\implies$(c) $K=\left\{p\in L:T_{p}\left(x\right)=x\right\}$. $K$ is non-empty and it is a routine exercise that $K$ is a compact subsemigroup of $L$.
			
		(c)$\implies$(a) Let $U$ be a neighborhood of $x$ and let $B=\left\{s\in S:T_{s}\left(x\right)\in U\right\}$ and suppose that $B$ is not syndetic. So, there exists $G\in \mathcal{P}_{f}\left(S\right)$,  such that the set $$\mathcal{E}=\left\{\sigma\left(H\right)\setminus\bigcup_{t\in H}t^{-1}A:H\in\mathcal{P}_{f}\left(\sigma\left(G\right)\right)\right\}\bigcup\left\{\sigma\left(K\right):K\in\mathcal{P}_{f}\left(S\right)\right\}$$ has finite intersection property. So, pick $r\in \beta S$ such that $\mathcal{E}\subseteq r$. As $\left\{\sigma\left(K\right):K\in\mathcal{P}_{f}\left(S\right)\right\}\subseteq r$, $r\in \delta S$. Then $\delta Sr\cap\overline{B}=\emptyset $	( For suppose instead one had some $q\in\delta S$ with $B\in qr$, then $t^{-1}B\in r$ for $t\in \mathcal{P}_{f}\left(\sigma\left(G\right)\right)$). Now $\delta Srp$ is a left ideal of $\delta S$ which is contained in $L$, as $L$ is the minimal left ideal of $\delta S$, then $\delta Srp=L$. Thus we may pick some $q\in \delta Sr$ such that $qp=p$ . $T_{q}\left(x\right)=T_{q}\left(T_{p}\left(x\right)\right)=T_{qp}\left(x\right)=T_{p}\left(x\right)=x$, so in particular $B\in q$. But, $q\in \delta Sr$, contradicts the fact that $\delta Sr\cap\overline{B}=\emptyset$
	\end{proof}
	The following theorem gives a sufficient condition for a uniformly recurrent point to be proximal with a given point.
	\begin{theorem}
		Let $S$ be an adequate partial semigroup. Let $\left(X,\langle T_{s}\rangle_{s\in S}\right)$ be a dynamical system and let $x\in X$. Then there is a uniformly recurrent point $y\in X$ such $x$ and $y$ are proximal.
	\end{theorem}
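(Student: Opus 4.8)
The plan is to produce the required point $y$ directly as the image of $x$ under a single idempotent ultrafilter sitting inside a minimal left ideal of $\delta S$; the point of the argument is that one and the same idempotent will simultaneously witness the uniform recurrence of $y$ and the proximality of the pair $(x,y)$. First I would use the fact that $\delta S$ is a compact right topological semigroup to select a minimal left ideal $L$ of $\delta S$. Such minimal left ideals exist, and each is closed: for $p\in L$ one has $L=\delta S\,p=\rho_{p}(\delta S)$ by minimality, and this is a continuous image of the compact set $\delta S$ under the continuous map $\rho_{p}$ from Theorem \ref{Operation}(b), hence compact. Thus $L$ is itself a compact Hausdorff right topological semigroup, so by Ellis' theorem it contains an idempotent $p=p\cdot p$.

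Next I would set $y=T_{p}(x)$ and verify the two defining properties of Definition \ref{dynamically central sets in partial }. For uniform recurrence, Corollary \ref{action of delta s}, applied with both factors equal to $p$ (legitimate since $p\in\delta S\subseteq\beta S$), gives
\[
T_{p}(y)=T_{p}\bigl(T_{p}(x)\bigr)=T_{pp}(x)=T_{p}(x)=y .
\]
Hence $p$ is an idempotent of the minimal left ideal $L$ fixing $y$, and the uniformly-recurrent characterization lemma (implication (c)$\Rightarrow$(a)) shows that $y$ is a uniformly recurrent point of $\left(X,\langle T_{s}\rangle_{s\in S}\right)$.

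For proximality I would observe that the very same computation already records $T_{p}(x)=y=T_{p}(y)$, that is, $T_{p}(x)=T_{p}(y)$ with $p\in\delta S$. By the proximality characterization lemma this equality is exactly the condition needed to conclude that $x$ and $y$ are proximal, which finishes the proof.

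The argument is quite short once the two characterization lemmas and Corollary \ref{action of delta s} are in hand, so I do not anticipate a serious obstacle; the work is essentially bookkeeping. The two points that do require care are the existence of a minimal left ideal of $\delta S$ carrying an idempotent in the partial-semigroup setting (handled above via compactness of $\rho_{p}(\delta S)$ and Ellis' theorem), and the correct order in applying Corollary \ref{action of delta s}, whose hypothesis demands the right-hand factor to lie in $\delta S$ — satisfied here precisely because $p\in L\subseteq\delta S$. The appeal of this route is that the single idempotent $p$ does double duty, so no separate construction is needed for the proximal partner and the recurrent point.
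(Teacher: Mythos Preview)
Your proposal is correct and follows essentially the same route as the paper: pick a minimal idempotent $p$ in a minimal left ideal $L$ of $\delta S$, set $y=T_{p}(x)$, compute $T_{p}(y)=T_{pp}(x)=T_{p}(x)=y$, and then invoke the proximality characterization lemma together with the (c)$\Rightarrow$(a) direction of the uniformly-recurrent characterization lemma. Your write-up is in fact more careful than the paper's, spelling out the existence of $L$ and its idempotent via compactness and Ellis' theorem and checking the hypothesis of Corollary~\ref{action of delta s}.
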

	
	\begin{proof}
		Let $L$ be any minimal left ideal of $\delta S$ and pick an idempotent $p\in L$. Let $y=T_{p}\left(x\right)$. Now, $T_{p}\left(y\right)=T_{p}\left(T_{p}\left(x\right)\right)=T_{pp}\left(x\right)=T_{p}\left(x\right)=y$. Then $x$ and $y$ are proximal and $y$ is uniformly recurrent point.
	\end{proof}
	
The following theorem gives a necessary condition for two points to be proximal.
		\begin{theorem}\label{eta}
		Let $S$ be an adequate partial semigroup. Let $\left(X,\langle T_{s}\rangle_{s\in S}\right)$ be a dynamical system and let $x,y\in X$. If $x$ and $y$ are proximal, then there is a minimal left ideal $L$ of $\delta S$ such that $T_{p}\left(x\right)=T_{p}\left(y\right)$ for all $p\in L$.
	\end{theorem}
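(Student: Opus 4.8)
The plan is to combine the ultrafilter characterization of proximality established just above with the structure theory of the compact right topological semigroup $\delta S$. Since $x$ and $y$ are proximal, the preceding proximality lemma furnishes some $q\in\delta S$ with $T_{q}\left(x\right)=T_{q}\left(y\right)$. The goal is then to locate a minimal left ideal $L$ on which this single coincidence is inherited by every element, and this is where the algebraic structure of $\delta S$ does the work.

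The key observation is that $\delta Sq$ is a left ideal of $\delta S$: indeed $\delta S\left(\delta Sq\right)\subseteq\delta Sq$, and since $\rho_{q}$ is continuous and $\delta S$ is compact, $\delta Sq=\rho_{q}\left(\delta S\right)$ is in fact a closed left ideal. Because $\delta S$ is a compact right topological semigroup, every left ideal contains a minimal left ideal, so I would choose a minimal left ideal $L$ of $\delta S$ with $L\subseteq\delta Sq$.

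It then remains to verify $T_{p}\left(x\right)=T_{p}\left(y\right)$ for every $p\in L$. Since $p\in L\subseteq\delta Sq$, I can write $p=rq$ for some $r\in\delta S\subseteq\beta S$. Applying Corollary \ref{action of delta s} with this $r$ and with $q\in\delta S$ gives $T_{p}\left(x\right)=T_{rq}\left(x\right)=T_{r}\left(T_{q}\left(x\right)\right)$ and similarly $T_{p}\left(y\right)=T_{r}\left(T_{q}\left(y\right)\right)$. As $T_{q}\left(x\right)=T_{q}\left(y\right)$ by the choice of $q$, the two expressions agree, which is exactly the desired conclusion for every $p\in L$.

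I do not expect a genuine obstacle here; the only point requiring care is confirming that the factorization $p=rq$ together with Corollary \ref{action of delta s} is legitimate. This hinges on $q$ lying in $\delta S$ rather than merely in $\beta S$, so that the identity $T_{r}\circ T_{q}=T_{rq}$ is applicable — and this is precisely what the proximality lemma guarantees. A secondary point worth stating explicitly is the standard fact that in a compact right topological semigroup every left ideal contains a minimal one, which legitimizes the choice of $L\subseteq\delta Sq$.
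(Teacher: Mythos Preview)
Your argument is correct and mirrors the paper's proof: the paper observes that $\{p\in\delta S:T_{p}(x)=T_{p}(y)\}$ is a nonempty left ideal (using exactly the computation $T_{qp}(x)=T_{q}(T_{p}(x))=T_{q}(T_{p}(y))=T_{qp}(y)$ that you spell out via Corollary~\ref{action of delta s}) and then implicitly passes to a minimal left ideal inside it, whereas you pass to a minimal $L\subseteq\delta Sq$ for a single witnessing $q$---but $\delta Sq$ is just a subideal of that set, so the two routes are essentially identical.
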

	
	\begin{proof}
		$\left\{p\in\delta S:T_{p}\left(x\right)=T_{p}\left(y\right)\right\}\neq \emptyset$. It is a left ideal of $\delta S$, for every $p,q\in \delta S$, $T_{p}\left(x\right)=T_{p}\left(y\right)$ implies that $T_{qp}\left(x\right)=T_{q}\left(T_{p}\left(x\right)\right)=T_{q}\left(T_{p}\left(y\right)\right)=T_{qp}\left(y\right)$.
	\end{proof}
The following theorem connects minimal idempotents with proximal and uniformly recurrent points. 
	\begin{theorem}
		Let $S$ be an adequate partial semigroup. Let $\left(X,\langle T_{s}\rangle_{s\in S}\right)$ be a dynamical system and let $x,y\in X$. There is a minimal idempotent $p\in \delta S$ such that $T_{p}\left(x\right)=y$ if and only if $x$ and $y$ are proximal and $y$ are uniformly recurrent. 
	\end{theorem}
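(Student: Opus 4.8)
The plan is to prove both implications purely by combining the three characterizations already established: the proximality lemma ($x,y$ are proximal if and only if $T_{p}(x)=T_{p}(y)$ for some $p\in\delta S$), the characterization of uniformly recurrent points (for a minimal left ideal $L$ and a point $z$, the point $z$ is uniformly recurrent if and only if some idempotent of $L$ fixes $z$), and Theorem \ref{eta} (proximality of $x,y$ produces an \emph{entire} minimal left ideal $L$ on which $T_{q}(x)=T_{q}(y)$). The compositional identity $T_{pq}=T_{p}\circ T_{q}$ from Corollary \ref{action of delta s} is the only computational tool I will need, and the fact recalled in the preliminaries that an idempotent is minimal precisely when it lies in the smallest ideal $K(\delta S)=\bigcup\{L:L\text{ minimal left ideal}\}$.

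For the forward direction I would assume there is a minimal idempotent $p\in\delta S$ with $T_{p}(x)=y$, and fix a minimal left ideal $L$ containing $p$. First I would verify that $y$ is a fixed point of $T_{p}$: since $y=T_{p}(x)$, Corollary \ref{action of delta s} together with $pp=p$ gives $T_{p}(y)=T_{p}(T_{p}(x))=T_{pp}(x)=T_{p}(x)=y$. Thus $p$ is an idempotent in the minimal left ideal $L$ fixing $y$, so the characterization of uniformly recurrent points yields that $y$ is uniformly recurrent. For proximality I would simply note that $T_{p}(x)=y=T_{p}(y)$, so $p\in\delta S$ witnesses $T_{p}(x)=T_{p}(y)$ and the proximality lemma applies.

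For the converse I would assume $x$ and $y$ are proximal and $y$ is uniformly recurrent. Theorem \ref{eta} then supplies a minimal left ideal $L$ with $T_{q}(x)=T_{q}(y)$ for every $q\in L$. Since $y$ is uniformly recurrent, applying the characterization of uniformly recurrent points to this \emph{same} $L$ produces an idempotent $q\in L$ with $T_{q}(y)=y$. Combining these, $T_{q}(x)=T_{q}(y)=y$; and because $q$ is an idempotent lying in a minimal left ideal, it belongs to $K(\delta S)$ and is therefore a minimal idempotent. Setting $p=q$ completes the argument.

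Once the three lemmas are in place the proof is essentially bookkeeping, and I expect the only delicate point—the main obstacle—to be the alignment of minimal left ideals in the converse: the idempotent that fixes $y$ and the ideal on which $T_{q}$ collapses $x$ onto $y$ must be one and the same. This is not automatic from uniform recurrence in the abstract, but it is rescued by the fact that the uniform-recurrence lemma is stated for an \emph{arbitrary} minimal left ideal, so I am free to feed it exactly the $L$ delivered by Theorem \ref{eta}, which is precisely what lets a single minimal idempotent do both jobs at once.
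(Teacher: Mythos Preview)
Your proposal is correct and follows essentially the same route as the paper: for necessity you compute $T_{p}(y)=T_{pp}(x)=T_{p}(x)=y$ and invoke the uniform-recurrence lemma and the proximality lemma, and for sufficiency you feed the minimal left ideal from Theorem~\ref{eta} into the uniform-recurrence characterization to extract a single minimal idempotent doing both jobs. Your write-up is in fact more explicit than the paper's, which leaves the computation $T_{p}(y)=y$ implicit and has its internal cross-references slightly garbled, but the underlying argument is identical.
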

	
	\begin{proof}
	\textbf{Necessity:} Since $p$ is minimal, there is a minimal left ideal $L$ of $\delta S$ such that $p\in L$. Thus by the previous theorem, $y$ is uniformly recurrent, and $x$ and $y$ are proximal.\\

	\textbf{Sufficiency:} Pick by previous theorem a minimal left ideal $L$ of $\delta S$ such that $T_{p}\left(x\right)=T_{p}\left(y\right)$ for all $p\in $. Pick by Theorem \ref{eta}, an idempotent $p\in L$ such that $T_{p}\left(y\right)=y$.
		
	\end{proof}
Now, we provide a lemma, which is analog of \cite[Lemma 19.14, 495 ]{HS1}.

\begin{lemma}\label{shift of sequence}
  Let $S$ be an adequate partial semigroup. Let $X=\prod_{s\in S}\left\{0,1\right\}$ and for $s\in S$ define $T_{s}:X\rightarrow X$, by
   $$T_{s}\left(x\right)\left(t\right)=
 \begin{cases}

x\left(st\right) & \text{ if } t\in\phi\left(s\right)\\
0 & \text{ if } t\notin\phi\left(s\right)
\end{cases}.$$ Then  $\left(X,\langle T_{s}\rangle_{s\in S}\right)$ is a dynamical system.
\end{lemma}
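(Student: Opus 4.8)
The plan is to verify the three defining conditions of Definition~\ref{dynamical system for partial } for the pair $\left(X,\langle T_{s}\rangle_{s\in S}\right)$ in turn, with essentially all of the work concentrated in condition (iii). Condition (i) is immediate: $X=\prod_{s\in S}\{0,1\}$ is a product of finite discrete spaces, each of which is compact and Hausdorff, so by Tychonoff's theorem $X$ is compact, and a product of Hausdorff spaces is Hausdorff.

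For condition (ii) I would invoke the universal property of the product topology, namely that a map into $X$ is continuous if and only if each of its coordinate components is continuous. Fixing $s\in S$ and a coordinate $t\in S$, the component $x\mapsto T_{s}(x)(t)$ is, by the very definition of $T_s$, either the coordinate projection $x\mapsto x(st)$ when $t\in\phi(s)$ (continuous, being a projection of the product) or the constant map $x\mapsto 0$ when $t\notin\phi(s)$ (also continuous). Hence every component of $T_s$ is continuous, and therefore $T_s$ itself is continuous.

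The substance is condition (iii): for $s,t\in S$ with $t\in\phi(s)$ (so that $st$ is defined and $T_{st}$ makes sense) one must check $T_sT_t=T_{st}$ as maps $X\to X$. I would fix $x\in X$ and compare the two functions coordinate by coordinate, evaluating at an arbitrary $r\in S$. Expanding the definitions, $T_{st}(x)(r)$ equals $x\big((st)r\big)$ when $r\in\phi(st)$ and $0$ otherwise, while unwinding the composition produces the value $x\big(s(tr)\big)$ exactly when $r\in\phi(t)$ and $tr\in\phi(s)$, and $0$ in every other case. The associativity clause of the partial operation then matches the nonzero values, since whenever either of $(st)r$ or $s(tr)$ is defined the other is too and they are equal.

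The step I expect to be the main obstacle, and the one deserving genuine care rather than a wave of the hand, is the bookkeeping of the undefined cases: showing that the two maps vanish on precisely the same set of coordinates. Concretely, I would establish the domain equivalence $r\in\phi(st)\iff\big(r\in\phi(t)\ \text{and}\ tr\in\phi(s)\big)$ directly from the biconditional form of partial associativity (``if either side is defined, then so is the other''), noting that $(st)\cdot r$ is defined if and only if $s\cdot(t\cdot r)$ is, and that the latter being defined is exactly the assertion that $tr$ is defined and lies in $\phi(s)$. Granting this equivalence, on the common domain the two values agree by the equality clause of associativity, and off it both sides are $0$ by construction; this closes condition (iii) and completes the verification.
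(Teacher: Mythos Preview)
Your proposal is correct and follows essentially the same route as the paper: continuity of $T_s$ via the coordinate projections $\pi_t\circ T_s$ (each being either $\pi_{st}$ or constant), and the semigroup relation $T_sT_t=T_{st}$ by a coordinatewise comparison that reduces to the domain equivalence $r\in\phi(st)\iff\big(r\in\phi(t)\text{ and }tr\in\phi(s)\big)$ together with partial associativity. If anything you are slightly more thorough, since you explicitly verify condition~(i) (compact Hausdorff via Tychonoff) and spell out why the domain equivalence follows from the biconditional form of associativity, both of which the paper leaves implicit.
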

\begin{proof}
Let $s\in S$. To see that $T_{s}$ is continuous it is sufficient two show that $\pi{t}T_{s}$ is continuous for each $t\in S$. Let $x\in X$, then $\left(\pi_{t}T_{s}\right)x=\pi_{t}\left(T_{s}x\right)=\left(T_{s}x\right)\left(t\right)$. Where, 

$$\left(T_{s}x\right)\left(t\right)=
 \begin{cases}

x\left(st\right)=\pi_{st}\left(x\right) & \text{ if } t\in\phi\left(s\right)\\
0 & \text{ if } t\notin\phi\left(s\right)
\end{cases}.$$ Therefore $T_{s}$ is continuous because of

$$\pi_{t}T_{s}=
 \begin{cases}

\pi_{st} & \text{ if } t\in\phi\left(s\right)\\
constant & \text{ if } t\notin\phi\left(s\right)
\end{cases}.$$
To prove the second part, let $s,t\in S$. Let $x\in X$ and  $T_{t}\left(x\right)\in X$ is defined by 

$$\left(T_{t}x\right)\left(u\right)=
 \begin{cases}

x\left(tu\right) & \text{ if } u\in\phi\left(t\right)\\
0 & \text{ if } u\notin\phi\left(t\right)
\end{cases}.$$ Then

$$\left(T_{s}T_{t}x\right)\left(u\right)=
 \begin{cases}

x\left(stu\right) & \text{ if } u\in\phi\left(t\right) \text{ and }tu\in\phi\left(s\right)\\
0 & \text{ if } u\notin\phi\left(t\right)\text{ or }tu\notin\phi\left(s\right)
\end{cases}.$$ Now, $u\in \phi\left(st\right)\iff u\in \phi\left(t\right)$ and $tu\in\phi\left(s\right)$. From these facts together with the above function, we have

$$\left(T_{s}T_{t}x\right)\left(u\right)=
 \begin{cases}

x\left(stu\right) & \text{ if } u\in\phi\left(st\right) \\
0 & \text{ if } u\notin\phi\left(st\right)
\end{cases}.$$ 
Therefor $T_{s}T_{t}=T_{st}$ if $t\in\phi\left(s\right)$
 \end{proof}
The following theorem is our main theorem which gives dynamical characterization of Central sets of adequate partial semigroups.

	\begin{theorem}
		 Let $S$ be an adequate partial semigroup and $B\subset S$. Then $B$ is central if and only if $B$ is dynamically central.
	\end{theorem}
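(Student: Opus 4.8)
My plan is to follow the template of \cite[Theorem 19.27]{HS1}, splitting the statement into its two implications and using three tools from the preceding development: the theorem (stated just above Lemma~\ref{shift of sequence}) characterizing when there is a minimal idempotent $p\in\delta S$ with $T_{p}(x)=y$, the explicit shift system of Lemma~\ref{shift of sequence}, and Corollary~\ref{action of delta s}. Throughout I will use that, for $r\in\beta S$, the action is $T_{r}(x)=r\text{-}\lim_{s\in S}T_{s}(x)$, so that $p$-limits commute with the coordinate projections $\pi_{t}$.

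For the implication \emph{dynamically central $\Rightarrow$ central}, suppose $(X,\langle T_{s}\rangle_{s\in S})$, points $x,y$, and a neighborhood $U$ of $y$ witness that $B=\{s\in S:T_{s}(x)\in U\}$ is dynamically central. Since $y$ is uniformly recurrent and $x,y$ are proximal, the theorem characterizing minimal idempotents supplies a minimal idempotent $p\in\delta S$ with $T_{p}(x)=y$. As $T_{p}(x)=p\text{-}\lim_{s\in S}T_{s}(x)$ and $U$ is a neighborhood of $y=T_{p}(x)$, Definition~\ref{P-limit} gives directly that $\{s\in S:T_{s}(x)\in U\}\in p$; that is, $B\in p$ with $p$ a minimal idempotent, so $B$ is central by Definition~\ref{a}. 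This direction is essentially automatic once the idempotent theorem is available.

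For the converse \emph{central $\Rightarrow$ dynamically central}, fix a minimal idempotent $p\in\delta S$ with $B\in p$ and take the shift system of Lemma~\ref{shift of sequence}, setting $x=\chi_{B}$ and $y=T_{p}(x)$. Since $pp=p$, Corollary~\ref{action of delta s} gives $T_{p}(y)=T_{p}(T_{p}(x))=T_{pp}(x)=T_{p}(x)=y$; because $p$ is a minimal idempotent it lies in a minimal left ideal $L$, so the equivalence between uniformly recurrent points and idempotents fixing them shows $y$ is uniformly recurrent, while $T_{p}(x)=y=T_{p}(y)$ combined with the ultrafilter characterization of proximality shows $x$ and $y$ are proximal. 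It then only remains to produce a neighborhood $U$ of $y$ with $\{s\in S:T_{s}(x)\in U\}=B$.

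This last point is the step I expect to be the main obstacle, since it requires a coordinate of $X$ that \emph{reads off} membership in $B$. The governing computation is $T_{s}(\chi_{B})(t)=\chi_{B}(st)=\chi_{s^{-1}B}(t)$, so one wants a coordinate $e$ with $se=s$ for every $s$; then $T_{s}(x)(e)=\chi_{B}(s)$, the basic neighborhood $U=\{z\in X:z(e)=1\}$ satisfies $T_{s}(x)\in U\iff s\in B$, and $y\in U$ because $y(e)=p\text{-}\lim_{s}\chi_{B}(s)=1$ follows from $B\in p$. When $S$ possesses no right identity one cannot read a single coordinate of $\prod_{s\in S}\{0,1\}$, so the delicate point is to index the shift space instead by $S\cup\{e\}$ for a formally adjoined right identity $e$ while keeping the acting partial semigroup $S$ unchanged, and to verify that the restricted family $\langle T_{s}\rangle_{s\in S}$ is still a dynamical system in the sense of Definition~\ref{dynamical system for partial }. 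Modulo this bookkeeping, checking $y\in U$, the identity $\{s:T_{s}(x)\in U\}=B$, and the commutation of $p$-limits with $\pi_{t}$ are all routine, and the two implications together establish the theorem.
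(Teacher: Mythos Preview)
Your proposal is correct and follows essentially the same route as the paper: for sufficiency you invoke the minimal-idempotent characterization to get $T_{p}(x)=y$ and hence $B\in p$, and for necessity you set $x=\chi_{B}$ in the shift system, let $y=T_{p}(x)$, and read off $B$ via a clopen neighborhood at an adjoined identity coordinate. Your anticipation of the obstacle is exactly on point---the paper likewise passes from $\prod_{s\in S}\{0,1\}$ (as in Lemma~\ref{shift of sequence}) to $\prod_{s\in S\cup\{e\}}\{0,1\}$ inside the proof, and your computation $y(e)=p\text{-}\lim_{s}\chi_{B}(s)=1$ is a slight streamlining of the paper's verification that $y(e)=1$.
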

 
\begin{proof}	
	\textbf{Necessity:} Let $G=S\cup \left\{e\right\}$, $X=\prod_{s\in G}\left\{0,1\right\}$ and for $s\in S$ define $T_{s}:X\rightarrow X$  by  $$T_{s}\left(x\right)\left(t\right)=
 \begin{cases}

x\left(st\right) & \text{ if } t\in\phi\left(s\right)\\
0 & \text{ if } t\notin\phi\left(s\right)
\end{cases}.$$ Then  $\left(X,\langle T_{s}\rangle_{s\in S}\right)$ is a dynamical system. Now let $x=1_{B}$, the characteristic function of $B$. Pick a minimal idempotent in $\delta S$ such that $B\in p$ and let $y=T_{p}\left(x\right)$. Then $y$ is a uniform recurrent point and $x$ and $y$ are proximal. Now let $U=\left\{z\in X:z\left(e\right)=y\left(e\right)\right\}$.
	  Then $U$ is a neighbourhood of $y$ in $X$. We note that $y\left(e\right)=1$. Indeed, $y=T_{p}\left(x\right)$, so, $\left\{s\in S:T_{s}\left(x\right)\in U\right\}\in p$
	   and we may choose some $s\in B$ such that $T_{s}\left(x\right)\in U$.
	    Then $y\left(e\right)=T_{s}\left(x\right)\left(e\right)=x\left(se\right)=x\left(s\right)=1$. Thus given any $s\in S$,
	$$
	\begin{aligned}
		s\in B &\iff x\left(s\right)=1\\
		&\iff T_{s}\left(x\right)\left(e\right)=1\\
		&\iff T_{s}\left(x\right)\in U.
	\end{aligned}$$
	
	\textbf{ Sufficiency:} Choose a dynamical system $\left(X,\langle T_{s}\rangle_{s\in S}\right)$, points $x,y\in X$ and neighbourhood $U$ of $y$ such that $x$ and $y$ are proximal  with $y$ uniformly recurrent point and $B=\left\{s\in S:T_{s}\left(x\right)\in U\right\}$. choose a minimal idempotent $p$ in $\delta S$ such that $T_{p}\left(x\right)=y$. then $B\in p$. This completes the proof.
\end{proof}

 \section*{Acknowledgement}  The second author of this paper is supported by NBHM postdoctoral fellowship with reference no: 0204/27/(27)/2023/R \& D-II/11927.
	
	\bibliographystyle{plain}

\end{document}